\newtheorem{Thm}{Theorem}[section]
\newtheorem{lem}[Thm]{Lemma}
\newtheorem{Pro}[Thm]{Proposition}
\newtheorem{cor}[Thm]{Corollary}
\newtheorem{rem}[Thm]{Remark}
\theoremstyle{definition}
\newtheorem{deft}[Thm]{Definition}
\newtheorem{exa}[Thm]{Example}
\newtheorem*{starredtheorem}{Theorem $\ddagger$}
\numberwithin{equation}{section}
\newcommand{\cQ}{\mathcal{Q}}
\renewcommand{\tilde}{\widetilde}
\def\a{\alpha}
\def\aa{\mathcal A}
\def\andd{\quad\hbox{and}\quad}
\def\b{\beta}
\def\bs{\boldsymbol}
\def\bl4{B_{\ell\geq4}}
\def\bbbc{{\mathbb C}}
\def\d{\delta}
\def\fg{\mathfrak{g}}
\def\fh{\mathfrak{h}}
\def\fk{\mathfrak{k}}
\def\lam{\lambda}
\def\LL{\mathcal{L}}
\def\fl{\mathfrak{L}}
\def\fm{(\cdot,\cdot)}
\def\bbbq{\mathbb{Q}}
\def\supp{\hbox{\rm supp}}
\def\1k{\frac{1}{k}}
\def\op{\oplus}
\def\ot{\otimes}
\def\la{\langle}
\def\ra{\rangle}
\def\sub{\subseteq}
\def\sg{\sigma}
\def\pf{\noindent{\bf Proof. }}
\def\sspan{\hbox{\rm span}}
\def\ft{\mathfrak{t}}
\def\bbbz{{\mathbb Z}}
\def\1il{1\leq i\leq\ell}
\newcommand{\summ}[1]{\raisebox{0.1ex}{\scalebox{1}{$\displaystyle \sum_{#1}\;$}}}
\newcommand{\Bigop}[2]{\raisebox{0.2ex}{\scalebox{0.7}{$\displaystyle \bigoplus_{#1}^{#2}\;$}}}
\newcommand{\blackdiamond}{\rotatebox[origin=c]{45}{$\vcenter{\hbox{$\scriptscriptstyle\blacksquare$}}$}}
 \title{Admissible modules over affine Lie superalgebras: The Final Step in the characterization}
\thanks{2020 Mathematics Subject Classification: 17B10, 17B67.}
\thanks{Key Words: Finite weight modules, 
	Twisted affine Lie superalgebras,  Parabolic induction, Twisted localization}
\begin{document}

\maketitle

\pagestyle{myheadings}
\markboth{Admissible modules over affine Lie superalgebras}{M. Yousofzadeh}

\centerline{ Malihe Yousofzadeh}

\centerline{{\scalebox{0.65} {Department of Pure Mathematics, Faculty of Mathematics and Statistics, University of Isfahan,}}}\centerline{{\scalebox{0.65} { P.O.Box 81746-73441, Isfahan, Iran, and }}}

\centerline{{\scalebox{0.65} { School of Mathematics
 Institute for Research in
	Fundamental Sciences (IPM), }}}
\centerline{{\scalebox{0.65} {P.O. Box: 19395-5746, Tehran, Iran.}
}}

\centerline{{\scalebox{0.65} {		 ma.yousofzadeh@sci.ui.ac.ir \& ma.yousofzadeh@ipm.ir.	}}}

\curraddr{}


\begin{abstract}
Over the past three decades, there have been several attempts to characterize modules over affine Lie superalgebras. One of the main  issues in this regard is dealing with zero-level modules. In this paper, we  study these modules and {complete the characterization} of simple admissible  modules over affine Lie superalgebras. 
\end{abstract}
\maketitle

\section{Introduction}
\label{intro}
To explain the subject of this paper and present our results, we begin by reviewing  a few {well-known} definitions.  Suppose that  $\fk=\fk_0\op\fk_1$   is  a Lie superalgebra  having a root space decomposition with respect to 
 a finite dimensional subalgebra $\fh\sub \fk_0$ with corresponding root system $\Phi.$ We have $\Phi=\Phi_0\cup \Phi_1$ in which 
\[\Phi_i:=\{\a\in \Phi\mid \fk_i\cap \fk^\a\neq \{0\}\}\quad (i=0,1).\] Elements of $\Phi_0$ are called {even} and elements of $\Phi_1$ are called {odd.}
Suppose   that $\fk$ is equipped with a supersymmetric even non-degenerate invariant bilinear form which is non-degenerate on $\fh$ and transfer, naturally,  the form on $\fh$ to a symmetric bilinear form $\fm$ on $\fh^*.$
We say that a $\fk$-module $M$ is an $\fh$-{weight module} (or just weight module if there is no ambiguity) if $M$ is decomposed into weight spaces 
\[M^\lam:=\{v\in M\mid hv=\lam(h)v\quad (h\in\fh)\}\quad (\lam\in \fh^*).\]The set $\{\lam\in \fh^*\mid M^\lam\neq\{0\}\}$ is called the {support} of $M$.
For $T\sub \Phi$, we define
$T^{in}(M)$ (resp. $T^{ln}(M)$) to be the subset of $T$  consisting of all $\a\in T$ with $(\a,\a)\neq 0$ such that  $0\neq x\in\fk^\a$ acts on $M$ injectively (resp. locally nilpotently).
An  $\fh$-weight $\fk$-module  is called  a {finite}  $\fh$-{weight module} if each weight space is finite dimensional.  A finite weight $\fk$-module  $M$ is called integrable if $\{\a\in \Phi\mid (\a,\a)\neq 0\}=\Phi^{ln}(M)$. A finite weight $\fk$-module  $M$  is called  {admissible} if the multiplicity of weight spaces are uniformly bounded and  the support  of $M$  lies  in a single coset of $\fh^*/Q$ where $Q$ is the root lattice $\sspan_\bbbz \Phi$.

A subset $P$  of  $\Phi$  is called a {parabolic subset} of $\Phi$ if 
 ${\Phi}=P\cup -P$ and $(P+P)\cap {\Phi}\sub P.$ For a parabolic subset $P$ of $\Phi,$ we  have the decomposition
\[\fk=\fk^+_{_P}\op\fk^\circ_{_P}\op\fk^-_{_P}\] where
 $$\hbox{\small $\fk^\circ_{_P}:=\Bigop{\a\in P\cap -P}{}\fk^\a,\; \fk^+_{_P}:=\Bigop{\a\in P\setminus-P}{}\fk^\a\andd \fk^-_{_P}:=\Bigop{\a\in - P\setminus P}{}\fk^\a.$}$$
 Set   $\mathfrak{p}:=\fk^\circ_{_P}\op\fk^+_{_P}$. Each simple $\fk^\circ_{_P}$-module $N$ is a simple module over $\mathfrak{p}$ with the trivial action of $\fk^+.$ Then  $$\tilde N:=U(\fk)\ot_{U(\mathfrak{p})}N$$ is a $\fk$-module; here  $U(\fk)$ and $U(\mathfrak{p})$ denote, respectively,  the universal enveloping algebras of $\fk$ and $\mathfrak{p}.$ If the  $\fk$-module  $\tilde N$ contains a  unique maximal submodule $Z$ {intersecting} $N$ trivially and $P\neq \Phi$, the quotient module $${\rm Ind}^{\fk}_{P}(N):=\tilde N/Z$$ is called  a {parabolically induced} module. A simple  finite weight $\fk$-module  which is not parabolically induced is called {cuspidal}.

With these definitions in place, we now review the background of the subject. The study of modules over affine Lie (super)algebras has a long history, dating back to 1974, when V. Kac initiated the study of highest weight modules over Kac–Moody algebras.  An affine Lie (super)algebra $\fl$ contains a canonical central element $c.$ An $\fl$-module $M$ is said to be of level $\lam\in\bbbc$ if $c$ acts as  the scalar $\lam$ on $M$.  The level plays a fundamental role in the representation theory of $\fl.$
In  \cite{CP1}  and \cite{CP2}, V. Chari and A. Pressley investigated integrable simple objects in the category 
$\mathcal{C}$ of finite weight modules over affine Lie algebras, and showed that these are either highest–lowest weight modules or {the} so-called loop modules, {depending on whether the level is nonzero or zero}. Since then, several attempts have been made to study the simple objects of the category 
$\mathcal{C}$, as well as those of its subcategory consisting of admissible modules; see \cite{DG}  and the references therein. The same category 
$\mathcal{C}$ had been studied for reductive Lie algebras in \cite{F} and \cite{Mat}. In all these works, two key tools are parabolic induction and {so-called} twisted localization.

It was shown that a simple finite weight module over a reductive Lie algebra is parabolically induced from a cuspidal module \cite{F}. The concept of twisted localization was introduced in O. Mathieu’s work \cite{Mat} to study cuspidal modules over reductive Lie algebras. Using twisted localization, he proved that every cuspidal module over a reductive Lie algebra is isomorphic to a submodule of a certain module known as a coherent family, and then studied  these coherent families.

There are two main points in  Mathieu's characterization; first, the twisted localization process itself, and second, the study of coherent families.  In \cite{DG}, I.~Dimitrov and D.~Grantcharove  apply the twisted localization process for affine Lie algebras to reduce the study of cuspidal objects 
in the category $\mathcal{C}$ to the study of {some other simple objects obtained through this process.} 
The use of twisted localization  is  justified if the {requisite characterizations of these simple objects are available.} 
To address the problem of describing the simple constituents of twisted localizations for affine Lie algebras,  I.~Dimitrov and D.~Grantcharove make extensive use of constructions of Berman, Billig, Chari, and Pressley, as well as Mathieu’s classification; see \cite{BB, C, CP1, CP2, Mat}.

In this paper, we prove that twisted localization is also feasible in the super case and show how it completes the characterization problem. In our previous works, we identified the necessary characterizations for this purpose. We briefly summarize them below and, in doing so, introduce the notations needed for  the paper.

Suppose that $\fl$ is an affine Lie superalgebra with $\fl_1\neq \{0\}$ and assume  $\fh$ is the standard Cartan subalgebra of $\fl$ with corresponding root system $$R=R_0\cup R_1.$$ 
There is a non-degenerate bilinear form on $\fh$  which can be transferred naturally  to a bilinear form $\fm$ on the dual space $\fh^*$ of $\fh.$ The set of imaginary roots, namely $R_{im}:=\{\a\in R\mid (\a, R)=\{0\}\},$ is a free abelian group of rank one; say e.g., $\bbbz\d.$  For each subset $S$ of $\fh^*$, we define
 \[{\small\begin{array}{c}S_{re}:=\{\a\in S\mid (\a,\a)\neq 0\},\\
\hbox{\footnotesize{(real elements)}}\end{array}
~\begin{array}{c}S_{ns}:=\{\a\in S\setminus R_{im}\mid (\a,\a)= 0\}\\\hbox{\footnotesize{(non-singular elements)}}\end{array}\andd \begin{array}{c}S^\times:=S_{re}\cup S_{ns},\\\hbox{\footnotesize{(non-imaginary elements)}}.\end{array}}\] There is a {finite subset $\dot R$  of  $\fh^*$  which is a disjoint union $\dot R_0\uplus \dot R_1$ of two subsets $\dot R_0$ and $\dot R_1$ such that $\dot R_0\neq \{0\}$ is a finite root system with at most three irreducible components, $$R\sub \dot R+\bbbz\d\andd \dot R_1=\dot R_{ns}\cup  \{\dot\a\in\dot R_{re}\mid \exists m\in\bbbz~{\rm s.t}~ \dot\a+m\d,2(\dot\a+m\d)\in R\}.$$  Moreover,  for each $0\neq \dot\a\in \dot R,$ there are $r_{\dot\a}\in\{1,2,4\}$ and $0\leq p_{\dot\a}< r_{\dot\a}$  such that for $r:={\rm max}\{r_{\dot\a}\mid 0\neq \dot\a\in \dot R\},$  
 \begin{equation}\label{r}\tag{$\dagger$}
r_{\dot\a}\mid r,\quad
 r\bbbz\d+R\sub R\andd (\dot\a+\bbbz\d)\cap R=\dot\a+r_{\dot\a}\bbbz\d+p_{\dot\a}\d\quad(0\neq \dot\a\in \dot R);
\end{equation}
see \cite{van-thes} for the details.   
It is known} that if  $V$ is a simple finite weight $\fl$-module, then,  we have  
\[R_{re}=\{\a\in R\mid(\a,\a)\neq 0\}=R^{in}(V)\cup R^{ln}(V).\]

There are two important distinctions between affine Lie algebras and affine Lie superalgebras; the first  is the existence of non-singular roots and the second is the existence of real roots $\a$ such that  $2\a$ is also a root. These features make  the characterization problem significantly more complex in the superalgebra case, compared to the non-super case; e.g.,  {in both cases, whether  super or non-super,} $R_{im}\sub \sspan_\bbbz R^\times,$ {however} in the non-super case, $R^\times=R_{re}$ which in turn  implies that $R_{im}\sub\sspan_\bbbz R_{re};$ {the phenomena which it is not necessarily true  for affine Lie superalgebras}. We also know that $\fl^0$ contains a two-dimensional subspace $\bbbc c\op\bbbc d$ where $c$ is the canonical central element and $d$ is a degree derivation of $\fl,$ the quotient space  $\Bigop{k\in\bbbz}{}\fl^{k\d}$ modulo $\bbbc c\op\bbbc d$ is an algebra that arises in   the process of  parabolic induction. 
This algebra is an abelian Lie algebra in the non-super case, {whereas} in some  super case, it becomes a split central extension of a non-abelian Lie superalgebra {which} we refer to as a quadratic Lie superalgebra. The characterization problem, in particular, requires the study of modules over quadratic Lie superalgebras—a topic we have addressed in \cite{KRY}. {We also need  the following theorem}  which plays a pivotal role both in the development of the theory and in the present paper:

\begin{starredtheorem}[{\cite[Thm.~4.8]{you8}}]\label{property}
Suppose that $V$ is a simple finite $\fh$-weight $\fl$-module. Then, for each $\b\in R_{re},$ one of the following  happen:
\begin{itemize}
\item[\rm (i)]  $(\b+\bbbz\d)\cap R\sub R^{ln}(V),$
\item[\rm (ii)] $(\b+\bbbz\d)\cap R\sub R^{in}(V),$
\item[\rm (iii)] there exist $k\in\bbbz$ and $t\in\{0,1,-1\}$ such that for $\gamma:=\b+k\d,$
\begin{align*}
&(\gamma+\bbbz^{\geq 1}\d)\cap R\sub R^{in}(V),\quad (\gamma+\bbbz^{\leq 0}\d)\cap R\sub R^{ln}(V),\\
& (-\gamma+\bbbz^{\geq t}\d)\cap R  \sub R^{in}(V),\quad (-\gamma+\bbbz^{\leq t-1}\d)\cap R\sub R^{ln}(V),
\end{align*}
\item[\rm (iv)] there exist $k\in\bbbz$ and $t\in\{0,1,-1\}$ such that for $\eta:=\b+k\d,$
\begin{align*}
&(\eta+\bbbz^{\leq -1}\d)\cap R\sub R^{in}(V),\quad (\eta+\bbbz^{\geq 0}\d)\cap R\sub R^{ln}(V),\\
& (-\eta+\bbbz^{\leq -t}\d)\cap R  \sub R^{in}(V),\quad (-\eta+\bbbz^{\geq 1-t}\d)\cap R\sub R^{ln}(V).
\end{align*}
\end{itemize}
\end{starredtheorem}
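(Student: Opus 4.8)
The plan is to fix $\b\in R_{re}$ and reduce the statement to an analysis of $V$ as a module over the subalgebra $\fs\sub\fl$ generated by the real root spaces along the strings $\pm\b+\bbbz\d$, together with the imaginary root spaces and Cartan elements they produce by bracketing (and, when $2\b\in R$, also the spaces $\fk^{\pm2\b+n\d}$; symmetrically, if $\b=2\gamma'$ for an odd root $\gamma'$, the spaces $\fk^{\pm\gamma'+n\d}$). By $(\dagger)$ the set of $n$ with $\b+n\d\in R$ is an arithmetic progression of step $r_{\dot\b}$, where $\dot\b\in\dot R$ represents $\b$; and $\fs$ is, up to central extension, a rank-one affine Lie (super)algebra — of $A_1^{(1)}$ type, of a twisted $A_1$-affinization, or of affine Lie superalgebra type — according to $r_{\dot\b}$ and the parity data. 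Fix nonzero $e_n\in\fk^{\b+n\d}$, $f_n\in\fk^{-\b+n\d}$, $z_k\in\fk^{k\d}$. I will lean on three ingredients. First, the dichotomy $R_{re}=R^{in}(V)\cup R^{ln}(V)$ quoted above, whose mechanism — for a real root vector $x$ the subspace $\{v:x^Nv=0 \text{ for }N\gg0\}$ is an $\fl$-submodule because $\ad x$ is locally nilpotent on $\fl$, hence is $0$ or $V$ — I will reuse in string-wise form. Second, the rank-one theory: for each real $\gamma$ in the picture the pair $(\fk^\gamma,\fk^{-\gamma})$ generates a copy of $\mathfrak{sl}_2$ or $\mathfrak{osp}(1|2)$, and when $2\gamma\in R$ the identity $e_\gamma^2=\tfrac{1}{2}[e_\gamma,e_\gamma]\in\bbbc^\times e_{2\gamma}$ yields the clean implications $\gamma\in R^{ln}(V)\Rightarrow2\gamma\in R^{ln}(V)$ and $2\gamma\in R^{in}(V)\Rightarrow\gamma\in R^{in}(V)$. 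Third, the shift relations: for a suitable choice of $z_{\pm r_{\dot\b}}$ one has $[z_{\pm r_{\dot\b}},e_n]\in\bbbc^\times e_{n\pm r_{\dot\b}}$ and $[z_{\pm r_{\dot\b}},f_n]\in\bbbc^\times f_{n\pm r_{\dot\b}}$, which propagate injective/locally-nilpotent behaviour along a $\d$-string.

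The first real step is to prove that, on each of the two strings, the injective locus and the locally-nilpotent locus are each intervals of the progression — that is, the behaviour switches at most once along $\b+\bbbz\d$, and likewise along $-\b+\bbbz\d$. I would obtain this by restricting to $\fs$ and invoking the classification of simple weight modules over rank-one affine Lie (super)algebras (in the spirit of \cite{C,CP1,CP2,DG} and their super refinements), exploiting that $R^{ln}$-membership is inherited by any $\fs$-subquotient of $V$ while $R^{in}$-membership of a given $\gamma$ on $V$ can be certified by applying the dichotomy to an appropriately chosen $\fs$-simple subquotient; the shift relations tie the behaviour at consecutive roots of a string together, and the $e_\gamma^2\sim e_{2\gamma}$ implications force the switch on the $2\b$-string (when present) to sit at twice the switch on the $\b$-string, so no genuinely new case is created there. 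Cases (i) and (ii) are precisely the outcomes in which no switch occurs on the $\b$-string (all locally nilpotent, resp. all injective).

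Assuming a switch does occur, the second step is to correlate the two strings. Restricted to $\fs$: a nonzero level produces a generalized highest- or lowest-weight vector, and the sign of the level decides whether the injective half of the $\b$-string is its ``upper'' half — forcing alternative (iii) after one reads off the matching behaviour of the $-\b$-string from the bracket $[e_n,f_m]\in\fk^{(n+m)\d}$ — or its ``lower'' half, forcing (iv); a zero level forces the $-\b$-string to switch in the opposite phase and to be injective towards both ends. Finally, comparing the switch position $\gamma=\b+k\d$ on the $\b$-side with the switch position on the $-\b$-side through the rank-one module structure pins the discrepancy between them to a window described by $t\in\{0,1,-1\}$: the three possibilities record whether, at the junction of the two halves, there is exactly one root direction $\gamma$ that is ``integrable'' ($\gamma,-\gamma$ both locally nilpotent), or one that is ``doubly injective'', or neither — all a priori possible exactly because of the super phenomena ($2\gamma\in R$, non-singular roots, and the quadratic Lie superalgebra structure on $\bigoplus_{k\in\bbbz}\fl^{k\d}$ modulo $\bbbc c\op\bbbc d$). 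Matching the resulting data to the four displayed alternatives finishes the argument.

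I expect the principal difficulty to lie in this correlation step in the twisted and super cases. Unlike the untwisted affine Lie algebra situation — where $\bigoplus_k\fl^{k\d}$ is abelian modulo the centre, the strings are genuine $\widehat{\mathfrak{sl}_2}$-strings, and one always gets $t=0$ — here one must simultaneously track which elements of $\b+\bbbz\d$ are actual roots (the progression has step $r_{\dot\b}\in\{1,2,4\}$ and offset $p_{\dot\b}$), the $\mathfrak{osp}(1|2)$-subtleties when $2\gamma\in R$, and the possibly non-trivial action along $\bbbz\d$ of the central extension of the quadratic Lie superalgebra; it is precisely these features that enlarge the admissible defect to $t\in\{0,1,-1\}$.
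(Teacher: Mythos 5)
This theorem is quoted in the paper from \cite[Thm.~4.8]{you8}; the paper under review does not contain a proof, so there is nothing here to compare your argument against line by line. What follows assesses your plan on its own terms.

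Your high-level structure is the natural one: reduce to the rank-one subalgebra generated by the $\pm\b+\bbbz\d$ root strings together with the imaginary root spaces, use the basic dichotomy coming from the fact that $\{v: x^Nv=0\text{ for }N\gg 0\}$ is a submodule, exploit the $\mathfrak{osp}(1|2)$ relation $e_\gamma^2\in\bbbc^\times e_{2\gamma}$, and then analyse the position of the switch on each string and the phase shift between them. All of that is in the right spirit. However, there is a genuine gap at the central step: you propose to establish the ``interval'' property of $R^{in}(V)$ and $R^{ln}(V)$ along a $\d$-string by ``invoking the classification of simple weight modules over rank-one affine Lie (super)algebras.'' No such classification is available off the shelf in the twisted super setting, and more importantly, the result you are trying to prove is precisely the kind of structural precursor that such a classification would itself require — so this step is close to circular. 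Moreover the reduction is not as clean as you suggest: $V$ need not be simple as an $\fs$-module, and while local nilpotence does pass to $\fs$-subquotients, injectivity on $V$ cannot be certified by looking at a single $\fs$-simple subquotient (injectivity on a subquotient says nothing about injectivity on $V$). Your ``shift relations'' $[z_{\pm r_{\dot\b}},e_n]\in\bbbc^\times e_{n\pm r_{\dot\b}}$ are also stated too optimistically: in the twisted cases the imaginary root spaces can be higher-dimensional and a particular $z_k$ can annihilate a particular $e_n$, so you need an existence argument, not a choice of a single $z_{\pm r_{\dot\b}}$.

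A more likely route, and the one more consistent with the machinery the cited paper develops (compare the support characterization $\b\in R^{in}(V)\Leftrightarrow \mu+k\b\in\supp(V)$ for all $k\ge 0$ quoted later as \cite[Prop.~4.4]{you8}), is to work directly with supports: translate both the injectivity and the local nilpotence of a real root vector into geometric statements about $\supp(V)$, and use the fact that the support is invariant under the reflections $\sg_{\b+n\d}$ for $\b+n\d\in R^{ln}(V)\cap -R^{ln}(V)$, together with string arguments, to show the switch is monotone and to pin the $-\b$-string offset to $t\in\{0,\pm1\}$. You should not expect to get this for free from a module classification; the proof has to be built out of support combinatorics and rank-one $\mathfrak{sl}_2$/$\mathfrak{osp}(1|2)$ representation theory applied to individual weight vectors, not to the whole module as an $\fs$-module.
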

{We call  $\dot\b\in\dot R_{re}$ and $\b\in R\cap (\dot\b+\bbbz\d)$ full-locally nilpotent,  full-injective,  down-nilpotent hybrid and up-nilpotent hybrid respectively if    for $\b,$ properties stated in  (i) up to (iv)  occurs respectively.  
The set of full-locally nilpotent roots, full-injective roots and hybrid roots are denoted by $R_{\rm f-ln}=R_{\rm f-ln}(V),$ $R_{\rm f-in}=R_{\rm f-in}(V)$ and $R_{\rm hyb}=R_{\rm hyb}(V)$ respectively.} 
\smallskip

For a simple finite weight $\fl$-module $V$, we have \[R_{re}=R_{\rm f-ln}(V)\cup R_{\rm f-in}(V)\cup R_{\rm hyb}(V).\] {This  helps  to divide} the category of finite weight $\fl$-modules into some subcategories and study the {corresponding simple  objects according to their instinct natures}.  
In what follows, we present a list of known results {which altogether} constitute prerequisites for the objectives of this paper. 
We have two main cases:
\[\bullet~ R_{re}=R_{\rm hyb}(V)\cup R_{\rm f-ln}(V) \quad \bullet~ R_{\rm f-in}(V)\neq \emptyset\] 
and the following subcases:

\noindent $\blackdiamond$ $R_{re}=R_{\rm hyb}(V):$ 
In this case, $V$ is parabolically induced from a cuspidal  module over a  finite dimensional Lie superlagebra $\fg=\op_{i=1}^n\fg(i)$ in which each $\fg(i)$ is either a basic classical simple Lie superalgebra or a reductive Lie algebra; see \cite{you8, DKYY}.\\

\noindent $\blackdiamond$ $R_{re}= R_{\rm f-ln}(V):$  In this case, the module $V$ is  integrable.
\begin{itemize}
\item[(i)] If $\dot R_0$ has  one simple component and $M$ {is of a nonzero level}, then, $V$ is a highest weight module.
\item[(ii)]  There is no integrable nonzero-level simple $\fl$-module  if $\dot R_0$ has  more that one simple component; see \cite{E1, you12}. 
\item[(iii)]   An integrable zero-level simple admissible  $\LL$-module  is parabolically induced from a finite weight  simple module over $\Bigop{m\in\bbbz}{}\fl^{m\d}.$ Moreover, $\Bigop{m\in\bbbz}{}\fl^{m\d}$ is a semi direct product  $H\rtimes D$ in which $H$ is a $\bbbz$-graded abelian Lie algebra if $\fl\neq A(2k,2l)^{(4)}$ while it is a split central extension of the  quadratic Lie superalgebra if $\fl=A(2k,2l)^{(4)}$, also  $D$ is a one dimensional degree derivation Lie algebra; see \cite{KRY, RZ, WZ}.
\end{itemize}
\noindent $\blackdiamond$ $R_{re}=R_{\rm hyb}(V)\cup R_{\rm f-ln}(V)$ and both $R_{re}\cap R_{\rm hyb}(V)$ and  $R_{re}\cap R_{\rm f-ln}(V)$
 are nonempty:  {These modules are a generalization of modules called quasi-integrable modules and their   characterization  is known from} \cite{you9, DKY}.\\

\noindent $\blackdiamond$ $R_{\rm f-in}(V)\neq \emptyset$ and $R_{\rm f-in}(V)\cap -R_{\rm f-in}(V)=\emptyset:$ If $V$ is admissible, then, $V$ is parabolically induced from a module which is either integrable or quasi-integrable; see \cite{you10}.\\

\noindent $\blackdiamond$ $R_{\rm f-in}(V)\cap -R_{\rm f-in}(V)\neq\emptyset:$ It is the objective  of this paper to show that the characterization of these modules, with bounded weight multiplicities,  is reduced to the characterization of modules $N$ for which $R_{\rm f-in}(N)\cap -R_{\rm f-in}(N)=\emptyset.$ We exclude types $X^{(1)}$ where $X$ is a basic classical simple  Lie superalgebra of type $I$; an interest reader can consult \cite{CFR} for these excluded types.

To obtain our results, we make use of  twisted localization. One of the fundamental requirement for using  the twisted localization process is that an admissible module contains a simple submodule. Mathieu \cite{Mat} established this result for finite-dimensional simple Lie algebras by showing that such modules are of finite length. However, in the case of affine Lie (super)algebras, admissible modules are not necessarily of finite length. For affine Lie algebras, I.~Dimitrov and D.~Grantcharov \cite{DG} verified this key property through extensive use of the classification of finite weight simple modules over affine Lie algebras. In this work, we prove this fact  in the super case using  only Theorem~$\ddagger$.  

{The paper contains four sections; following  this introduction, we have a short section devoted to introducing the super version of twisted localization process. Section~3 is devoted to proving the fact that each zero-level admissible module contains a simple submodule and in the last section, we apply twisted localization process to get our main result on the characterization of  simple finite weight modules $V$ with $R_{\rm f-in}(V)\cap -R_{\rm f-in}(V)\neq \emptyset.$}

\section{twisted localization}\label{admissible} 
In this short subsection, we follow \cite{Mat}  and  introduce the super analogue of  the notion of  {\it twisted localization}.
Suppose that $\fl=\fl_0\op\fl_1$ is a Lie superalgebra having a root space decomposition $\fl=\Bigop{\a\in \fh^*}{}\fl^\a$ with respect to a finite dimensional subalgebra $\fh$  of $\fl_0$ whose dual space is denoted by $\fh^*.$ Assume $R=R_0\cup R_1$ is the corresponding root system.

Assume ${{f}}\in \fl_0\cap\fl^\a$ is a nonzero root vector corresponding to a nonzero even root $\a$. 
Extend  $\{x_0:={{f}}\}$ to a base $\{x_i\mid i\in I\cup \{0\}\}$  of $\fl$ where $I$ is a totally ordered index set. If we extend the total ordering  on $I$ to a total ordering on $I\cup\{0\}$ such that $i<0$ for all $i\in I,$ we get form PBW Theorem that for $n\in\bbbz^{\geq0}$ and $a\in U:=U(\fl),$ where $U(\fl)$ stands for the universal enveloping algebra of $\fl,$ $a{f}^n=0$ if and only if $a=0.$ Also, if extend the total ordering on $I$ to a total ordering on $I\cup\{0\}$ such that $0<i$ for all $i\in I,$ again, we get form PBW Theorem that for $n\in\bbbz^{\geq0}$ and $a\in U=U(\fl),$ ${f}^na=0$ if and only if $a=0.$  In particular, $\mathscr{S}:=\{{f}^n\mid n\in \bbbz^{\geq 0}\}$ is both  right and left reversible multiplicative subset of $U$; see \cite{Lam}.
If moreover,  ${{f}}$ acts locally nilpotently on $\fl$, we will get  that ${\rm ad}({{f}})$ acts locally nilpotently on $U(\fl)$ which in turn implies that  for each $u\in U=U(\fl)$ and $n\in \bbbz^{\geq 0},$ there is $m\in \bbbz^{\geq 0}$ such that $u{f}^m\in {f}^n U(\fl);$ see \cite[Lem.~4.2]{Mat}. 
This means that $\mathscr{S}$ is right permutable; see \cite{Lam}. Therefore, we can form \[U_\a:=\mathscr{S}^{-1}U,\] and we can embed $U$ in $\mathscr{S}^{-1}U$.
Next, assume $M$ is a (left) $U$-module and set $D_\a(M):=U_\a\ot_U M.$ If  $M=\Bigop{\lam\in\fh^*}{}M^\lam$ is a weight module. Then, 
\[D_\a(M)=U_\a\ot_U M =\Bigop{n\in\bbbz^{\geq 0}}{}\Bigop{\lam\in{\supp}(M)}{}  f^{-n}\ot M^\lam\]
is a left $U_\a$-module. As a $U$-module (equivalently as an $\fl$-module), $D_\a(M)$ has a weight space decomposition with 
\[\supp(D_\a(M))\sub \supp(M)+\bbbz^{\leq0}\a.\] 
One can easily see the following:
\begin{itemize}
\item  If  ${\a}\in R^{in}(M)$, then $M$ can be embedded, as a $U$-module, in $D_\a(M)$ and  $\supp(D_\a(M))= \supp(M)+\bbbz^{\leq0}\a$.
\item If $-\a\in R_0$, $\pm \a\in R^{in}(M)$ and $M$ is a finite weight module, then 
$D_\a(M)\simeq M$ as $U$-modules.
 \item If $M$ is a $U_\a$-module, $D_\a(M)\simeq M$ as $U_\a$-modules. 
\end{itemize}

Next, suppose that $\Omega$ is a $U_\a$-module and assume $x\in\bbbc.$ Define 
\begin{align*}
\Theta_\a^x:&U_\a\longrightarrow U_\a\\
&u\mapsto \sum_{i=0}^\infty {x\choose i}{\rm ad}(f)^i(u)f^{-i}\quad\quad(u\in U_\a)
\end{align*}
where 
\[{x\choose 0}:=1\andd {x\choose i}:=\frac{x(x-1)\cdots(x-i+1)}{i!}\quad (x\in \bbbc,~i\in\bbbz^{\geq 1}).\]
The $U_\a$-module twisted by $\Theta_\a^x$ is denoted by $\Omega(x)$ and each element is correspondingly
denoted by $v(x)$. Also, for complex numbers $x$ and $y$, we have 
\begin{align*}
(\Omega(x))(y)=\Omega(x+y).
\end{align*}
If $\Omega$ is an $\fh$-weight $\fl$-module, then 
\[\supp(\Omega)-x\a=\supp(\Omega(x))\andd (\Omega(x))^{\lam-x\a}=\Omega^{\lam}\quad (\lam\in\supp(\Omega)).\]

\medskip

For an $\fl$-module  $M$, we denote $(D_\a(M))(x)$   by $D^x_\a(M).$  
If $M$ is an $\fh$-weight  $\fl$-module, so is  $D^x_\a(M)$ and we get
\[\supp(D^x_\a(M))\sub \supp(M)-x\a+\bbbz^{\leq 0}\a.\]  We also have the following:
\begin{itemize}
\item If $\a\in R^{in}(M),$ then $\supp(D^x_\a(M))= \supp(M)-x\a+\bbbz^{\leq0}\a.$
\item  If  $-\a\in R_0,$   $\pm\a\in R^{in}(M)$ and $M$ is a finite weight $U$-module, then, 
$\supp(D^x_\a(M))= \supp(M)+x\a$ and $$(D^x_\a(M))^{\lam-x\a}=M^{\lam}.$$
\item If $M$ is  a simple $U$-module and $\a\in R^{in}(M),$ $D_\a^x(M)$ is a  simple $U_\a$-module.
\end{itemize}
\begin{deft} Suppose that $M$ is a simple finite weight $U$-module. If there is $x\in \bbbc$, a simple finite weight $U$-module $N$ and a real {even} root $\a$   such that $M$ is isomorphic to  $D_\a^x(N),$ we say $M$ is a {\it twisted localization} of $N$ using $x$. 
\end{deft}

\section{{Finite weight modules over   affine Lie superelagebras}}
Our objective in this paper is  finalizing the characterization of simple  admissible   modules over  affine Lie superalgeras.  In this section, we pave the way for the next section which is devoted to  completing the characterization. Let us start with gathering some known facts on affine supertheory. We assume the reader is familiar with affine Lie superalgebras, however, for details on affine Lie superalgebras and their root systems, the reader may consult \cite{van-thes} and \cite[Exa.~3.1 \& 3.2, Table~5]{DSY}.  

Assume  $\fl$ is an affine Lie superalgebra and  $\fh$ is its standard Cartan subalgebra with corresponding root system $R=R_0\cup R_1$.  Recall $\dot R$ and $R_{re}$ from \S~\ref{intro}. In the sequel, we use the following notations:
 \begin{equation}\label{S}
\begin{array}{ll}Q_{re}:=\sspan_\bbbz R_{re},&
 \dot S:=\dot R\cap R_0\andd
 Q_0:=\sspan_\bbbz(R_0\cap(\dot S\setminus\{0\}+\bbbz\d)).
 \end{array}
 \end{equation}
We mention that 
\begin{equation}\label{} 
\parbox{4.7in}{if $\LL$   is neither of type $A(1,2\ell-1)^{(2)}$ nor of type  $X^{(1)}$ in which 
$X$ is {a type $I$ among the types} of basic classical simple Lie superalgebras,
both  $Q_0$ as well as $Q_{re}$ have finite indices
in $Q:=\sspan_\bbbz R$ because {the both span the finite dimensional $\bbbq$-vector space $\sspan_\bbbq R.$}}
\end{equation}

From now on until the end of  this paper, we assume $\fl$  is  an affine Lie superalgebra with $\fl_1\neq \{0\}$ {that  is neither of type $A(1,2\ell-1)^{(2)}$ nor of type  $X^{(1)}$ in which 
$X$ is  a type  $I$  basic classical simple Lie superalgebra}.  We assume $\fh$ is the standard Cartan subalgebra of $\fl$ with the corresponding root system $R=R_0\cup R_1$. {We keep the notations as introduced in \S~\ref{intro}}.

\begin{rem}\label{rem-new}
{\rm 
\begin{itemize}  Suppose $V$ is a simple finite weight  $\fl$-module.
\item[(i)]  If  $R^{in}(V)=R_{re},$ as $Q_{re}$ has the  finite index in $\sspan_\bbbq R,$ $V$  is 
{admissible}. 

\item[(ii)]  Assume  $\eta\in R_{re}$ and recall $r$ from (\ref{r}) in \S~\ref{intro}. Then, we have the following:
\medskip

\begin{itemize}
\item[(a)] 
If either $\eta,-\eta\in R_{\rm f-in}$ or $\eta$ is up-nilpotent hybrid, then,  
$\pm\eta-rm\d\sub R^{in}(V)$ for large enough $m$. Moreover, there is 
$\gamma\in R\cap(\eta+\bbbz\d)$ such that  $\gamma- r\d,-\gamma- r\d\in R^{in}(V);$ in particular,  $\supp(V)-2r\d\sub\supp(V).$
\item[(b)] 
If either $\eta,-\eta\in R_{\rm f-in}$ or $\eta$ is down-nilpotent hybrid, then,  
$\pm\eta+rm\d\sub R^{in}(V)$ for large enough $m$. Moreover, there is 
$\gamma\in R\cap(\eta+\bbbz\d)$ such that  $\gamma+ r\d,-\gamma+r\d\in R^{in}(V);$ in particular,  $\supp(V)+2r\d\sub\supp(V).$\end{itemize}
\end{itemize}}
\end{rem}

\begin{lem}\label{final} 
Suppose that $M$ is a simple finite weight $\fl$-module such that $T:=R_{\rm f-in}(M)\cap -R_{\rm f-in}(M)\neq \emptyset.$   Then, the level of $M$ is zero.
\end{lem}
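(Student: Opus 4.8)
The plan is to argue by contradiction. Suppose the level $\ell$ of $M$ is nonzero and fix $\eta\in T$, so that $\eta\in R_{re}$ and $\eta,-\eta\in R_{\rm f-in}(M)$. Put $s:=r_{\dot\eta}$. Since $\eta$ itself lies in $(\eta+\bbbz\d)\cap R$, $(\ref{r})$ forces $p_{\dot\eta}=0$ and hence $(\eta+\bbbz\d)\cap R=\eta+s\bbbz\d$; as $\eta$ and $-\eta$ are full-injective, for every $m\in\bbbz$ both $\eta+ms\d$ and $-(\eta+ms\d)$ lie in $R^{in}(M)$. The first step I would record is an elementary \emph{bijectivity principle}: if $\b\in R_{re}$ and $\b,-\b\in R^{in}(M)$, then for each $\lam\in\supp(M)$ one has $M^{\lam+\b}\neq 0$, the map $e_\b\colon M^\lam\to M^{\lam+\b}$ is injective and so is $e_{-\b}\colon M^{\lam+\b}\to M^\lam$, whence, both weight spaces being finite dimensional, $e_\b$ restricts to a bijection $M^\lam\to M^{\lam+\b}$. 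Applying this with $\b=\eta+ms\d$ for all $m$ gives $\lam+\eta+ms\d\in\supp(M)$ whenever $\lam\in\supp(M)$; since $(\lam+\eta+ms\d)(d)=\lam(d)+\eta(d)+ms$, the set of $d$-eigenvalues occurring in $M$ is unbounded both above and below, i.e. $\supp(M)$ is unbounded in both $\d$-directions. (Alternatively, the latter also follows from Remark~\ref{rem-new}(ii).)

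Next I would localize the problem to a rank-one affine subalgebra. Let $\fa$ be the subalgebra of $\fl$ generated by the root spaces $\fl^{\pm(\eta+ms\d)}$, $m\in\bbbz$. Brackets of these land in $\bbbc c\oplus\bigoplus_{m\neq 0}\bbbc\,(h^{\vee}_{\dot\eta}\ot t^{ms})$, so $\fa+\fh$ is a subalgebra of $\fl$ with Cartan subalgebra $\fh$, imaginary roots $s\bbbz\d$ and real roots $\pm(\eta+ms\d)$ (together with their doubles when $\eta$ is odd, in which case one first checks $2\eta\in R$). Thus $\fa+\fh$ splits as the direct sum of a subspace of $\fh$ acting centrally and a rank-one affine Lie (super)algebra $\fa'$ -- of type $A_1^{(1)}$, $A_1^{(2)}$, $\mathfrak{osp}(1|2)^{(1)}$ or $\mathfrak{osp}(1|2)^{(2)}$ -- whose canonical central element is a nonzero multiple of $c$. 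Fix $\lam_0\in\supp(M)$ and set $C:=\bigoplus_{\lam\in\lam_0+\bbbz\eta+s\bbbz\d}M^\lam$. By the bijectivity principle $C$ is a nonzero $(\fa+\fh)$-submodule of $M$, all of whose weight spaces have the same finite dimension $N=\dim M^{\lam_0}$; restricting weights to $\fa'$ (an injective operation on this coset) shows that $C$ is, as an $\fa'$-module, an \emph{admissible} module (uniformly bounded multiplicities, support in a single coset of the root lattice of $\fa'$) whose support is invariant under translation by $s\d$, on which every real root vector of $\fa'$ acts bijectively, and whose level is still $\ell\neq 0$.

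The final step -- and the main obstacle -- is to reach a contradiction at the rank-one level. Here I would invoke the description of admissible (indeed of all finite weight) modules over rank-one affine Lie (super)algebras, classical in the even cases (Chari--Pressley and its refinements) and, in the superalgebra cases, available through Theorem~$\ddagger$: an admissible module of nonzero level over $\fa'$ must have support bounded above or below in the $\d$-direction, so a real root vector of $\fa'$ of sufficiently large $|\d|$-degree acts locally nilpotently on it. This is incompatible with $C$, whose support is $s\d$-invariant -- hence unbounded in both $\d$-directions -- and on which all real root vectors act bijectively. Hence $\ell=0$. The delicate points needing care in a full write-up are: making the splitting $\fa+\fh=\fa'\oplus(\text{central})$ precise uniformly across the twisted types; the $\mathfrak{osp}(1|2)$ cases (where one must establish $2\eta\in R$ and handle the root string $2\eta+\bbbz\d$); and passing from $C$ -- which need not be $\fa'$-simple -- to a simple object to which the rank-one classification applies, while retaining the two-sided unboundedness of the support.
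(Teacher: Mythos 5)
Your overall strategy is the same as the paper's: reduce to a lower-rank affine subalgebra on which all real root vectors act bijectively on a suitable submodule, observe that this submodule is admissible with support unbounded in the $\d$-direction, and then invoke a result forcing such a module to have level zero. However, your final step has a genuine gap. The paper uses $T_0 := T \cap R_0$ (nonempty since $2(R_{re}\cap R_1)\subset R_0$), builds a bona fide \emph{non-super} affine Lie algebra $\fk$ from an irreducible component of $\{\dot\alpha\in\dot R\mid(\dot\alpha+\bbbz\d)\cap T_0\neq\emptyset\}$, and shows that the $\fk$-submodule $W$ generated by one weight space of $M$ is admissible; then a single citation to Britten--Lemire \cite{BL3} (\emph{On level $0$ affine Lie modules}) gives that $c$ acts trivially on $W$, hence on $M$. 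Crucially, \cite{BL3} applies to admissible modules directly and does not require simplicity of $W$.

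You instead localize to a rank-one subalgebra $\fa'$ that may be super if $\eta$ is odd, and then claim that an admissible $\fa'$-module of nonzero level must have support bounded above or below in the $\d$-direction, attributing this in the super case to Theorem~$\ddagger$. That attribution does not hold: Theorem~$\ddagger$ is a structural statement about the $R^{in}/R^{ln}$ dichotomy along root strings for a \emph{simple} $\fl$-module, and it does not assert anything about the level. What you actually need is a Britten--Lemire-type statement for $\fa'$, which you neither cite nor prove. Compounding this, your module $C$ is not simple, and you flag but do not resolve the passage to a simple object: taking a simple quotient of $U(\fa')M^{\lambda_0}$ is not obviously compatible with retaining two-sided $\d$-unboundedness and bijectivity of real root vectors (note that the paper's Theorem~\ref{cont-simple}, which produces simple submodules, is stated only for zero-level modules and appears later, so it cannot be used here). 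Both difficulties vanish if you (a) choose $\eta\in T_0$ even, so $\fa'$ is a non-super affine Lie algebra, and (b) apply \cite{BL3} to the admissible $\fa'$-module $C$ directly rather than reaching for a rank-one classification of simple modules; this essentially recovers the paper's argument.
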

\begin{proof}
Set \[T_0:= T \cap R_{0} ~ \hbox{ ~ as well as ~} ~ \dot\Psi=\{\dot{\alpha}\in {\dot R} \mid (\dot\a+\bbbz\d)\cap T_0\neq \emptyset\}.\] Then, $\dot \Psi$  is a finite root system; 
we note that $T_0\neq\emptyset$ as $2(R_{re}\cap R_1)\sub R_0.$  Pick an irreducible component  $\dot \Phi$ of $\dot\Psi$ and set 
$\Phi:=(\dot \Phi+\mathbb{Z}\delta)\cap R_0$. Then, up to some central space,  \[\fh+\summ{\alpha \in\Phi^\times}\fl^{\alpha}+\summ{\alpha,\b \in \Phi^\times}[\fl^\a,\fl^{\b}]\] is an affine Lie algebra $\fk$ whose canonical central element is the canonical central element $c$ of $\fl$; see \cite[Prop.~2.10(b)]{DKY}.
Pick a weight space $N$ of $M$ and form the $\fk$-submodule $W$ of $M$ generated by $N$. Then, $W$ is an admissible $\fk$-module
and  so by  \cite{BL3}, 
$c$ acts trivially on $W$ and so on $M$.
\end{proof}

\subsection{Admissible modules} 
The main goal of this subsection is proving the fact that an admissible $\fl$-module of level zero contains a simple submodule. We shall show this fact in Theorem~\ref{cont-simple}; 
an essential prerequisite for this theorem is Theorem~\ref{rank-unconfined} whose proof requires a series of  {lemmas}. Let us start with the following definition:

\begin{deft}\label{admissible-chain-M}
Set 
\[\dot S:=\dot R\cap R_0\andd \dot\fh=\sspan_\bbbc\dot S.\]
{\rm \item[(a)] A  weight $\fl$-module $L$ is called {\it confined} if  $\lam|{_{\dot\fh}}=0$ for each $\lam\in\supp(L)$, otherwise, we call it {\it un-confined}.
\item[(b)] Suppose that $M$ is a finite weight $\fl$-module.
\subitem(i)   An infinite chain \[C: ~~~~\cdots\sub N_2\subsetneq M_2\sub N_1\subsetneq M_1\sub M\] of submodules of $M$ is called an {\it admissible} chain if for each $i\geq 1,$ $M_i$ is generated by a nonzero weight vector and  $M_i/N_i$ is  a simple $\fl$-module.  The cardinality of the set \[\{i\in\bbbz^{\geq1}\mid M_i/N_i\hbox{ is un-confined}\}\] is called {\it un-confined rank} of $C$ and denoted by $R_{_C}.$
\subitem(ii) An admissible chain   \[C': ~~~~\cdots\sub N'_2\subsetneq M'_2\sub N'_1\subsetneq M'_1\sub M\] of submodules of $M$ is called a $k$-{\it overlaped} {\it tail-extension} of $C$  if $M_i=M'_i$ and $N_i=N'_i$ for all $1\leq i\leq k.$ 
\subitem(iii) The admissible chain $C$ is called {\it $k$-confined}  if  for $i>k,$ $M_i/N_i$ is confined.
\subitem(v) We say the admissible chain $C$ is  {\it tail-$k$-confined}  if  each $k$-overlapped tail-extension of $C$  is  $k$-confined.
\subitem}
\end{deft}
\begin{exa}\label{exa-first-m}
{\rm Suppose that $L$ is a  {weight  $\fl$-module}. If $R^{in}(L)\neq \emptyset,$ then $L$ is un-confined.}
\end{exa}

Throughout this section, until Theorem~\ref{rank-unconfined}, we assume $M$ is a zero-level admissible $\fl$-module, in particular, $\supp(M)$ lies in a single $Q$-coset where $Q$ is the root lattice $\sspan_\bbbz R$. Also, we suppose  $\mathfrak{b}\in\bbbz^{> 0}$ is a bound for the  weight multiplicities of $M$ and set 
\[d:=\mathfrak{b}+1.\] 
Moreover, we assume   \[C: \cdots\sub N_2\subsetneq M_2\sub N_1\subsetneq M_1\sub M\] is an admissible chain
and set
\[L_t:=M_t/N_t\quad\quad(t=1,2,\ldots).\] 
We have from Theorem~$\dag$ in \S~\ref{intro} that  for $t\in \bbbz^{\geq 1},$
{\small 
\begin{align}
R_{re}=  R_{\rm hyb}(L_t)~\cup~&(R_{\rm f-ln}(L_t)\cup -R_{\rm f-in}(L_t))~\cup~ (R_{\rm f-in}(L_t)\cup-R_{\rm f-ln}(L_t))\nonumber\\
~\cup~&  (R_{\rm f-ln}(L_t)\cup-R_{\rm f-ln}(L_t))~\cup~(R_{\rm f-in}(L_t)\cup -R_{\rm f-in}(L_t)).\label{decom-sec}\end{align} }
\begin{lem}\label{Step1}
Assume $\mu\in\supp(M)$. Then, there exist at most $d=\mathfrak{b}+1$ indices   $i\in\bbbz^{\geq 1}$ with $\mu\in\supp(L_i).$
\end{lem}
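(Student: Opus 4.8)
The statement asserts that a fixed weight $\mu\in\supp(M)$ can appear in the support of at most $d=\mathfrak b+1$ of the subquotients $L_i=M_i/N_i$. The natural strategy is to produce, for each index $i$ with $\mu\in\supp(L_i)$, a nonzero vector in the $\mu$-weight space $M^\mu$ of $M$, in such a way that different indices contribute linearly independent vectors; since $\dim M^\mu\le\mathfrak b$, this forces at most $\mathfrak b+1$ such indices. Wait — $\mathfrak b+1=d$, so actually we need the bound "at most $\mathfrak b$" to get "at most $d$"; the extra slack of one is harmless, so it suffices to show the vectors we build span a subspace of $M^\mu$ of dimension equal to the number of relevant indices, hence that number is $\le\mathfrak b<d$. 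Let me restate the core claim: the set $\{i\in\bbbz^{\ge1}\mid \mu\in\supp(L_i)\}$ has at most $\mathfrak b$ elements, a fortiori at most $d$.

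Here is how I would carry it out. List the relevant indices as $i_1<i_2<\cdots<i_k$, so $L_{i_j}^\mu\neq\{0\}$ for each $j$. Recall the chain is
\[C:\ \cdots\sub N_2\subsetneq M_2\sub N_1\subsetneq M_1\sub M,\]
so for $s<t$ we have $M_t\sub N_s\sub M_s$, i.e. the $M_i$ are nested and $M_{i}\sub N_{i-1}$. For each $j$, choose a weight vector $v_j\in M_{i_j}^\mu$ whose image $\bar v_j$ in $L_{i_j}=M_{i_j}/N_{i_j}$ is nonzero; this is possible precisely because $\mu\in\supp(L_{i_j})$ and $M_{i_j}^\mu$ surjects onto $L_{i_j}^\mu=M_{i_j}^\mu/N_{i_j}^\mu$. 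I claim $v_1,\dots,v_k\in M^\mu$ are linearly independent. Indeed, suppose $\sum_j c_j v_j=0$ with not all $c_j$ zero; let $m$ be the largest index with $c_m\neq0$. Since $i_1<\cdots<i_k$, for every $j<m$ we have $i_j<i_m$, hence $M_{i_j}\supseteq M_{i_m}$; more to the point, for $j>m$ all $c_j=0$, and the key is to project into $L_{i_m}$. Actually the clean way: reduce the relation modulo $N_{i_m}$. For $j\ge m$ with $j>m$, $c_j=0$. For $j<m$: is $v_j\in N_{i_m}$? We have $i_j<i_m$, so $M_{i_m}\subseteq N_{i_j}\subseteq M_{i_j}$, which gives the wrong inclusion. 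So instead take $m$ to be the \emph{smallest} index with $c_m\neq0$ and reduce modulo $N_{i_m}$: for $j>m$, $v_j\in M_{i_j}\subseteq N_{i_{j}-1}\subseteq\cdots\subseteq N_{i_m}$ (using $i_j>i_m$ and the nesting $M_{i_j}\sub N_{i_j-1}\sub M_{i_m}$ chained down to $N_{i_m}$), while for $j<m$, $c_j=0$. Hence $0=\overline{\sum c_jv_j}=c_m\bar v_m$ in $L_{i_m}$, forcing $c_m=0$, a contradiction. Therefore $k\le\dim M^\mu\le\mathfrak b<d$.

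The only slightly delicate point — the one I would be most careful about — is the inclusion chain $M_{i_j}\subseteq N_{i_m}$ whenever $i_j>i_m$. This is immediate from the defining shape of an admissible chain: moving left along $C$ one passes $\cdots\subseteq N_{t}\subseteq M_t\subseteq N_{t-1}\subseteq M_{t-1}\subseteq\cdots$, so any term with a strictly larger index is contained in any term with a smaller index, and in particular $M_{i_j}\subseteq N_{i_m}$ when $i_j\ge i_m+1\ge i_m$, since $M_{i_m+1}\subseteq N_{i_m}$ already. No representation-theoretic input (e.g. Theorem~$\ddagger$, confinedness, the level-zero hypothesis) is needed here; the lemma is purely a bookkeeping consequence of the nesting of $C$ and the uniform multiplicity bound $\mathfrak b$. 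I would present it in exactly this order: fix $\mu$, list the indices, lift weight vectors, prove independence by projecting modulo $N_{i_m}$ for the minimal nonzero coefficient, conclude by the dimension bound.
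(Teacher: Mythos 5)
Your proof is correct and follows essentially the same approach as the paper: both arguments exploit the nesting $M_{t+1}\subseteq N_t\subseteq M_t$ of the admissible chain together with the uniform multiplicity bound $\dim M^\mu\le\mathfrak b$. The paper phrases it as a chain of alternating strict and weak dimension inequalities along $\mathfrak b+2$ hypothetical indices, whereas you lift nonzero weight vectors $v_j\in M_{i_j}^\mu$ and show their linear independence by reducing modulo $N_{i_m}$ at the minimal nonzero coefficient — an equivalent bookkeeping of the same dimension count (and in fact it yields the marginally sharper bound $\mathfrak b$).
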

\begin{proof} 
To the contrary, assume $\mu\in \supp(M_{i_j}/N_{i_j})$ for $j=1,\ldots,\mathfrak{b}+2$ with $i_1<\cdots<i_{\mathfrak{b}+2}.$ This shows that  $\mu\in \supp(M_{i_j})$ for $j=1,\ldots,\mathfrak{b}+2$ and so $\mu\in \supp(M_{i_j})\cap \supp(N_{i_j})$ for $j=1,\ldots,\mathfrak{b}+1$. Therefore,  we have 
 {\small \[ \dim(N_{i_{\mathfrak{b}+1}}^\mu)\lneq\dim(M_{i_{\mathfrak{b}+1}}^\mu)\leq \dim(N_{i_{\mathfrak{b}}}^\mu)\lneq\dim(M_{i_{\mathfrak{b}}}^\mu)\cdots\leq  \dim(N_{i_{1}}^\mu)\lneq\dim(M_{i_{1}}^\mu)\leq \dim(M^\mu).\]}
This implies that $\dim(M^\mu)\geq \mathfrak{b}+1>\mathfrak{b}$ which is a contradiction. 
\end{proof}
\begin{lem}\label{fin-12}
Recall (\ref{S}) and suppose that {$\mu\in \fh^*.$}
\begin{itemize}
\item[(i)] 
Let $\dot R\setminus\{0\}=\{\dot\a_1,\ldots,\dot \a_p\}$ and fix $\dot\eta_{i}\in \dot S$ $(1\leq i\leq p)$ {with
$\frac{2(\dot\a_i,\dot\eta_i)}{(\dot\eta_i,\dot\eta_i)}\in\bbbz^{>0}.$} Recall $r$ from (\ref{r}) in \S~\ref{intro}.  There are at most    $\sum_{j=1}^pdr\frac{2(\dot\a_j,\dot\eta_j)}{(\dot\eta_j,\dot\eta_j)}$  (which is independent of $C$) indices $t$  
such that $L_t$ is an un-confined integrable module and  $\supp(L_t)\cap(\mu+Q_0)$ contains an element  whose restriction to  $\dot\fh $ is zero.
\item[(ii)] Assume $q$ is a nonzero weight for the {semisimple} Lie algebra
\[{\dot\fg:=\Bigop{0\neq \a\in \dot S}{} \fl^\a\bigoplus\sum_{0\neq \a\in \dot S}[ \fl^\a, \fl^{-\a}]}.\] 
 and  fix  $\dot\gamma\in\dot S$  with
$\frac{2(q,\dot\gamma)}{(\dot\gamma,\dot\gamma)}\in\bbbz^{>0}.$
There are at most $dr\frac{2(q,\dot\gamma)}{(\dot\gamma,\dot\gamma)}$ (which is independent of $C$) indices $t$ such that   $L_t$ is an un-confined integrable module and $\supp(L_t)\cap(\mu+Q_0)$ contains an element whose restriction to ${\dot\fh}$ is $q.$ 
\end{itemize}
\end{lem}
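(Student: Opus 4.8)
The plan is to prove both parts by a counting argument that combines $\mathfrak{sl}_2$-theory for integrable modules with Lemma~\ref{Step1}. For part (ii), fix $\dot\gamma\in\dot S$ with $m:=\frac{2(q,\dot\gamma)}{(\dot\gamma,\dot\gamma)}\in\bbbz^{>0}$ and let $\alpha\in R_0\cap(\dot\gamma+\bbbz\d)$ be a real root with $\dot\alpha=\dot\gamma$. If $L_t$ is integrable and $\supp(L_t)\cap(\mu+Q_0)$ contains a weight $\lambda$ with $\lambda|_{\dot\fh}=q$, then consider the $\mathfrak{sl}_2$-triple attached to $\alpha$. Since $L_t$ is integrable, the real root vector $e_\alpha\in\fl^\alpha$ acts locally nilpotently; in an integrable weight module the weight $\lambda$ with $\langle\lambda,\alpha^\vee\rangle=\langle q,\dot\gamma^\vee\rangle=m>0$ forces, by the standard $\mathfrak{sl}_2$-string argument, that $\lambda-k\alpha\in\supp(L_t)$ for all $0\le k\le m$ (the $\mathfrak{sl}_2$-string through $\lambda$ has length at least $m+1$ in that direction — actually we only need one intermediate weight, but the full string is cleaner). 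In particular $\lambda-\alpha\in\supp(L_t)$, and $\lambda-\alpha$ has restriction $q-\dot\gamma\ne q$ to $\dot\fh$; iterating, $\lambda-k\alpha\in\supp(L_t)$ for $k=0,\dots,m$. Now I would pin things down to a fixed target weight: observe that as $\lambda$ ranges over $(\mu+Q_0)$ with $\lambda|_{\dot\fh}=q$ and $\lambda\in\supp(L_t)$, the weights $\lambda-k\alpha$ all lie in $\mu+Q_0$ (since $\alpha\in Q_0$ by definition of $Q_0$, because $\alpha\in R_0\cap(\dot S\setminus\{0\}+\bbbz\d)$). Among the strings of length $m+1$ one of the $m$ weights $\lambda-\alpha,\dots,\lambda-m\alpha$ has restriction $0$ to $\dot\fh$ only at the specific step $k$ with $\langle\lambda-k\alpha,\dot\gamma^\vee\rangle=0$, but more usefully: the weight $\lambda-k\alpha$ with $\lambda|_{\dot\fh}$ fixed equal to $q$ and $\alpha|_{\dot\fh}=\dot\gamma$ means $(\lambda-k\alpha)|_{\dot\fh}=q-k\dot\gamma$, and these are $m+1$ distinct cosets.

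The key reduction is this: I claim all such $\lambda$ (over all relevant $t$) have $\dot\fh$-restriction equal to $q$, hence differ only in the "imaginary" direction, i.e. $\lambda\in q + \bbbc\d + (\text{fixed coset})$; since $\supp(L_t)\subseteq \mu+Q$ and the $\d$-shifts are controlled, there are only finitely many — in fact at most $r$ relevant values — of $\lambda$ modulo the periodicity from (\ref{r}). More precisely, for each fixed residue of the $\d$-coefficient modulo $r$, pick the representative weight $\nu$ and pick the index $k_\nu\in\{1,\dots,m\}$; then $\nu - k_\nu\alpha$ has a definite $\dot\fh$-restriction $q-k_\nu\dot\gamma\ne 0$... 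Let me instead run the argument toward a weight that forces a bound via Lemma~\ref{Step1} directly: for each integrable $L_t$ as in the statement, fix the string and note that the weight $\nu_t:=\lambda_t-\alpha$ lies in $\supp(L_t)$ and in $\mu+Q_0$. The possible $\nu_t$ form a subset of $(\mu+Q_0)$ lying in finitely many $\d$-strings; by periodicity (\ref{r}) and $\supp(L_t)\subseteq\mu+Q$, these $\nu_t$ take at most $r\cdot m = r\frac{2(q,\dot\gamma)}{(\dot\gamma,\dot\gamma)}$ distinct values after accounting for the length-$(m+1)$ string collapsing $m+1$ consecutive $\alpha$-shifts. For each fixed value $\nu$, Lemma~\ref{Step1} says at most $d$ indices $t$ have $\nu\in\supp(L_t)$. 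Multiplying gives the bound $dr\frac{2(q,\dot\gamma)}{(\dot\gamma,\dot\gamma)}$.

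For part (i), I would apply part (ii)-type reasoning componentwise: the condition "$\supp(L_t)\cap(\mu+Q_0)$ contains an element whose restriction to $\dot\fh$ is zero" is the $q=0$ degenerate case, where the $\mathfrak{sl}_2$-string argument must be run with respect to each $\dot\a_i$ separately. For the weight $\lambda$ with $\lambda|_{\dot\fh}=0$ we have $\langle\lambda,\dot\a_i^\vee\rangle=0$, so instead I pick the auxiliary root $\dot\eta_i\in\dot S$ with $\frac{2(\dot\a_i,\dot\eta_i)}{(\dot\eta_i,\dot\eta_i)}\in\bbbz^{>0}$ and use the real root $\alpha_i\in R_0\cap(\dot\eta_i+\bbbz\d)$: hmm, but $\langle\lambda,\alpha_i^\vee\rangle=0$ too since $\lambda|_{\dot\fh}=0$. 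The right move is to run the $\mathfrak{sl}_2$-string through a nearby weight: since $L_t$ is un-confined, there is $\lambda'\in\supp(L_t)\cap(\mu+Q_0)$ with $\lambda'|_{\dot\fh}\ne0$, and by integrability the $\dot\fg$-submodule generated by that weight space is finite-dimensional, so $\supp(L_t)\cap(\lambda'+\dot Q)$ is the weight set of a finite-dimensional $\dot\fg$-module; among its weights is a $\dot\fg$-dominant one $\lambda''$, and for the index $i$ with $\langle\lambda'',\dot\a_i^\vee\rangle>0$ one gets a string $\lambda''-k\dot\alpha_i\in\supp(L_t)$ passing through the zero-restriction weight. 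Summing the string-length contributions $r\frac{2(\dot\a_j,\dot\eta_j)}{(\dot\eta_j,\dot\eta_j)}$ over $j=1,\dots,p$ and multiplying by $d$ via Lemma~\ref{Step1} yields $\sum_{j=1}^p dr\frac{2(\dot\a_j,\dot\eta_j)}{(\dot\eta_j,\dot\eta_j)}$.

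The main obstacle I anticipate is bookkeeping the translation between $\dot\fh$-restrictions and actual weights: the $\dot\fg$-string argument gives weights in $\supp(L_t)$ with controlled $\dot\fh$-restriction, but to invoke Lemma~\ref{Step1} I need to land on a bounded set of \emph{actual} weights in $\fh^*$, which requires using the periodicity relation (\ref{r}) (the factor $r$) to collapse the $\d$-direction, together with the constraint that $\supp(L_t)$ sits inside the single $Q$-coset $\mu+Q$ and, along the relevant strings, inside $\mu+Q_0$ so that $\alpha\in Q_0$ keeps us in the same $Q_0$-coset. Making the "at most $r\cdot\frac{2(\cdot,\cdot)}{(\cdot,\cdot)}$ actual weights" claim precise — i.e. that the string together with the mod-$r$ reduction genuinely produces that many candidate target weights and no more — is the delicate point; everything else is a straightforward multiplication of the per-weight bound $d$ from Lemma~\ref{Step1} by this count.
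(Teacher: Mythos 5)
You have the right overall shape — use integrability of $L_t$ to produce a bounded set of candidate weights, then invoke Lemma~\ref{Step1} — but the step that produces the boundedness is exactly the one you leave vague, and the mechanism you gesture at does not actually supply it. An $\mathfrak{sl}_2$-string through $\lambda_t$ in the $\alpha$-direction (where $\alpha\in(\dot\gamma+\bbbz\d)\cap R_0$) produces $m+1$ weights $\lambda_t,\lambda_t-\alpha,\dots,\lambda_t-m\alpha$ in $\supp(L_t)$, but these all depend on the starting weight $\lambda_t$, and there are a priori infinitely many candidates for $\lambda_t$ inside $\mu+Q_0$ with fixed $\dot\fh$-restriction (they differ by arbitrary $\d$-shifts in $Q_0\cap\bbbc\d$). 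Appealing to ``periodicity (\ref{r})'' does not close this: (\ref{r}) is a periodicity statement about the root system $R$, not about $\supp(L_t)$, and nothing in an $\mathfrak{sl}_2$-string argument alone forces $\supp(L_t)$ to be periodic in the $\delta$-direction. Your bound $r\cdot\tfrac{2(q,\dot\gamma)}{(\dot\gamma,\dot\gamma)}$ appears as an unexplained product rather than as a count of representatives modulo a genuine symmetry.

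The missing idea — and the heart of the paper's proof — is that the $\delta$-direction is collapsed by a Weyl-group translation, not by a single $\mathfrak{sl}_2$-string. Concretely, since $L_t$ is integrable, every real-root reflection preserves $\supp(L_t)$. Using (\ref{r}) to produce roots $\dot\gamma+rm\d$ and $\dot\gamma+2rm\d$, one checks (using $(\mu,\d)=0$, i.e.\ level zero) that
\[
\sigma_{\dot\gamma+2rm\delta}\,\sigma_{\dot\gamma+rm\delta}(\lambda)=\lambda+\frac{2(\lambda,\dot\gamma)}{(\dot\gamma,\dot\gamma)}\,rm\,\delta
\]
for every $\lambda$ occurring here, so $\supp(L_t)$ is invariant under $\delta$-translations by integer multiples of $\tfrac{2(q,\dot\gamma)}{(\dot\gamma,\dot\gamma)}r$. \emph{That} is what reduces all candidate weights to a set of at most $r\cdot\tfrac{2(q,\dot\gamma)}{(\dot\gamma,\dot\gamma)}$ actual elements of $\fh^*$, after which Lemma~\ref{Step1} gives the factor $d$. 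Your proposal never constructs this translation; without it, the pigeonhole count cannot get started. The same gap affects your sketch of part (i), where the extra complication is only a preliminary pigeonhole over the finitely many $\dot\alpha_j$ (the paper uses un-confinedness to pick \emph{some} $\a\in R^{\times}$ with $\fl^{\a}L_t^{\lambda_t}\neq\{0\}$, then pigeonholes on $j$; you instead route through a $\dot\fg$-dominant weight, which would work but is an unnecessary detour) — but the collapsing of the $\delta$-direction still has to come from the translation $\sigma_{\dot\eta_j+2rm\delta}\sigma_{\dot\eta_j+rm\delta}$, not from an $\alpha$-string.
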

\pf (i)  To the contrary, assume there is   a  subset $T\subset \bbbz^{\geq1}$ of cardinality  $1+\sum_{j=1}^pdr\frac{2(\dot\a_j,\dot\eta_j)}{(\dot\eta_j,\dot\eta_j)}$  such that for $t\in T,$ there is  $\lam_t\in \supp(L_t)\cap (\mu+Q_0)$ with $\lam_t|_{_{\dot{\fh}}}=0.$ 
 This implies that   there is ${q_*\in \sspan_\bbbz \dot S}$ such that  
 \begin{equation}\label{zero-MM}\mu+q_*|_{_{\dot\fh}}=0\andd \lam_t\in \mu+q_*+\bbbz\d\quad(t\in T).\end{equation}
 Since for all $t\in T,$ $L_t$ is un-confined, its support 
 contains some weight whose restriction to $\dot{\fh}$ is nonzero. So, 
 there is {$\a\in R^\times\sub (\dot R\setminus\{0\})+\bbbz\d$} such that $\LL^\a L_t^{\lam_t}\neq \{0\}.$ 
 So, there is $1\leq j\leq p$  and $T'\sub T$ of cardinality $1+dr\frac{2(\dot\a_j,\dot\eta_j)}{(\dot\eta_j,\dot\eta_j)}$  such that for $t\in T'$, $\mu+\dot \a_j+q_*+m_t\d\in\supp(L_t)$ for some $m_t\in\bbbz.$ 
Since   $R^{ln}(L_t)=R_{re}$ and
\[
\dot\eta_j+2r\bbbz\d,\dot\eta_j+r\bbbz\d\sub R,
\]
 we  get that  the reflections $\sg_{\dot\eta_j+2rm\d}$ and $\sg_{\dot\eta_j+rm\d}$\footnote{$\sg_\eta(\lam)=\lam-2\frac{(\lam,\eta)}{(\eta,\eta)}\eta$, for $\eta\in R_{re}$ and $\lam\in\fh^*$.}, for $m\in\bbbz,$  preserve the support of $L_t.$ In particular,  since $(\d,\mu)=0,$ for  $m\in\bbbz$ and $t\in T'$, using (\ref{zero-MM}), we have 
\[\mu+q_*+\dot\a_j+m_t\d{+}\frac{2(\mu+q_*+\dot\a_j,\dot\eta_j)}{(\dot\eta_j,\dot \eta_j)}rm\d=\sg_{\dot\eta_j+2rm\d}\sg_{\dot\eta_j+rm\d}(\mu+q_*+\dot\a_j+m_t\d)\in\supp(L_{t}). \]
This implies that  for some $0\leq k_t<\frac{2(\dot\a_j,\dot\eta_j)}{(\dot\eta_j,\dot\eta_j)}r= \frac{2(\mu+q_*+\dot\a_j,\dot\eta_j)}{(\dot\eta_j,\dot\eta_j)}r,$  we have 
 \[\mu+q_*+\dot\a_j+k_t\d\in \supp(L_t)\quad(t\in T').\]
But $T'$ is of cardinality $1+dr\frac{2(\dot\a_j,\dot\eta_j)}{(\dot\eta_j,\dot\eta_j)}$. So, at least for $d+1$  indices $t$, $k_t$'s are equal, that is at least for $d+1$  indices $t$, $L_t$'s have the same weight $\mu+q_*+\dot\a_j+k_t\d.$ This  is a contradiction due to Lemma~\ref{Step1}.

(ii)  Assume to the contrary that  there is  a subset $T\sub \bbbz^{\geq 1}$ of cardinality  $1+ dr\frac{2(q,\dot\gamma)}{(\dot\gamma,\dot\gamma)}$ such that  for each $t\in T$, there is  $\lam_t\in \supp(L_t)\cap (\mu+Q_0)$ with $\lam_t|_{_{\dot{\fh}}}=q.$ 
 Therefore,  there is $q_*\in \sspan_\bbbz\dot S$ with $\mu+q_*\mid_{_{\dot{\fh}}}=q$ and 
$\lam_t\in \mu+q_*+m_t\d$ for some $m_t\in\bbbz$  ($t\in T$). As above
for all $m\in\bbbz$ and $t\in T,$ we have 
{\small \begin{align*}
 \mu+q_*+m_t\d{+}\frac{2(q,\dot\gamma)}{(\dot\gamma,\dot \gamma)}rm\d=&\mu+q_*+m_t\d{+}\frac{2(\mu+q_*,\dot\gamma)}{(\dot\gamma,\dot \gamma)}rm\d\\=&\sg_{\dot\gamma+2rm\d}\sg_{\dot\gamma+rm\d}(\mu+q_*+m_t\d)\in\supp(L_{t}),
 \end{align*}}
and so, using the same argument as in part~(i), we get a contradiction.\qed
\medskip

\begin{lem}\label{fi-10} 
Use  (\ref{decom-sec}) and consider the decomposition
\begin{equation*}\label{decom-*}\dot S\setminus\{0\}=\dot R\cap R_0\setminus\{0\}=X^{^t}_1\uplus-X^{^t}_1\uplus X_2^{^t}\uplus X^{^t}_3\uplus X^{^t}_4
\end{equation*} with 
\begin{equation*}\label{sub*}\begin{array}{ll}
R_0\cap (X^{^t}_1+\bbbz\d)\sub R_{\rm f-in}(L_t)\cap -R_{\rm f-ln}(L_t),&
R_0\cap (X^{^t}_2+\bbbz\d)\sub R_{\rm f-ln}(L_t)\cap -R_{\rm f-ln}(L_t),\\
R_0\cap (X^{^t}_3+\bbbz\d)\sub R_{\rm f-in}(L_t)\cap -R_{\rm f-in}(L_t),&
R_0\cap (X^{^t}_4+\bbbz\d)\sub R_{\rm hyb}(L_t).
\end{array}
\end{equation*}
Assume $\mu\in \fh^*.$ Then, we have the following:
\begin{itemize}
\item[(i)] For each  $t$ with $\supp(L_t)\cap (\mu+Q_0)\neq \emptyset$, there are $q^t_i\in\sspan_{\bbbz}({R_0}\cap (X^{^t}_i+\bbbz\d))$ $(i=1,2)$ and $k(t)\in r\bbbz$ (see ~(\ref{r}) in \S~\ref{intro}) such that $\mu+q^t_1+q^t_2+k(t)\d\in\supp(L_t).$
\item[(ii)] Set\footnote{We denote the cardinality of a set $X$  by $|X|$ and we say $X\sub R$ is closed if  $(X+X)\cap R\sub X.$} $P:=\{\emptyset\neq Y\sub\dot S\setminus\{0\}\mid Y=-Y {\hbox{ and $Y\cup\{0\}$ is closed}}\}.$
There are at most $2d|P|$ (which is independent of $C$) indices ${t}$ for which \[{X^{^t}_3\neq \emptyset},~ {X^{^t}_2=X^{^t}_1=\emptyset}\andd \supp(L_t)\cap (\mu+Q_0)\neq\emptyset.\]
\item[(iii)] Set $P':=\{\emptyset\neq Y\sub \dot S\setminus\{0\}\mid Y\cap-Y=\emptyset\}.$
There are at most $d|P'|$ indices (which is independent of $C$) $t$  for which \[ X^{^t}_2=\emptyset, X^{^t}_1\not=\emptyset\andd \supp(L_t)\cap (\mu+Q_0)\neq\emptyset.\]
\item[(iv)] 
For $X\in P$, denote by $m_{_X}$, the number of weights of the finite dimensional   semisimple Lie algebra \[\fk_{_{X}}:=\Bigop{ \dot\a\in X}{}\fl^{\dot\a}\op\sum_{\dot\a\in X}[\fl^{\dot\a},\fl^{-\dot\a}]\] which are either zero or a minuscule weight.
Then, the number of  indices $t$
 for which $X^{^t}_2$ is a nonempty proper subset of $\dot S\setminus\{0\}$ and $\supp(L_t)\cap (\mu+Q_0)\neq\emptyset$ is  at most  $\displaystyle{(|P'|+1)rd\sum_{\dot S\setminus\{0\}\neq X\in P}m_{_{X}}}$ (which is independent of $C$). \end{itemize}
\end{lem}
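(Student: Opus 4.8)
The whole argument will rest on Lemma~\ref{Step1}: for a fixed weight $\nu\in\fh^*$, at most $d$ of the quotients $L_t$ have $\nu$ in their support. So my plan for each of (ii)--(iv) is to attach to every index $t$ of the stated type a weight of $\supp(L_t)$ drawn from a finite list whose size depends only on $\mu$ and on the ``shape'' of $L_t$ --- that is, on the partition $X^t_1\uplus -X^t_1\uplus X^t_2\uplus X^t_3\uplus X^t_4$ together with a bounded amount of extra data --- and then to count the possible shapes. Part~(i) is the device that manufactures these weights, and (ii)--(iv) are pigeonhole counts built on it, so I would prove (i) first.

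For (i): start from some $\lam\in\supp(L_t)\cap(\mu+Q_0)$ and write $\lam-\mu=q_1+q_2+q_3+q_4$ with $q_i\in\sspan_\bbbz(R_0\cap(X^t_i+\bbbz\d))$, merging the $X^t_1$- and $(-X^t_1)$-contributions into $q_1$ (legitimate because the $\bbbz$-span of a symmetric set is the $\bbbz$-span of either half). Every $\eta\in R_0\cap(X^t_3+\bbbz\d)$ has $\eta,-\eta\in R_{\rm f-in}(L_t)\subseteq R^{in}(L_t)$, so $\fl^{\pm\eta}$ act injectively on $L_t$ and $\supp(L_t)$ is invariant under translation by $\bbbz\eta$, hence under $\sspan_\bbbz(R_0\cap(X^t_3+\bbbz\d))$; this lets me delete $q_3$. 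For the hybrid part $q_4$: each $\dot\a\in X^t_4$ lies in $R_{\rm hyb}(L_t)$, so along its $\d$-string one half is injective and the other locally nilpotent (Theorem~$\ddagger$ (iii)/(iv)). I would combine the reflections $\sg_{\dot\a+n\d}$ with $\dot\a+n\d$ in the locally nilpotent half (which preserve $\supp(L_t)$), the injective ``$+\d$-rays'' of the hybrid root (which give $\supp(L_t)+\bbbz^{\geq 0}r_{\dot\a}\d\subseteq\supp(L_t)$ once the $\d$-level is large, allowing one to migrate to a region where the $\sspan_\bbbz X^t_4$-component can be traded away), and the $\pm 2r\d$-translations of Remark~\ref{rem-new}(ii); the net effect is to move $\lam-q_3$ to a weight with no $\sspan_\bbbz X^t_4$-component, the accumulated $\d$-shifts being arranged, via (\ref{r}) and Remark~\ref{rem-new}(ii), to lie in $r\bbbz$. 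One has to check that these moves keep the $X^t_1$- and $X^t_2$-parts inside their $\bbbz$-spans, for which I would use that $X^t_2\cup\{0\}$ and $(X^t_1\cup -X^t_1\cup X^t_2)\cup\{0\}$ are closed subsystems of the finite root system $\dot S$. Relabelling the outputs as $q^t_1,q^t_2$ and collecting the leftover $\d$-shift into $k(t)\in r\bbbz$ gives (i).

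Parts (ii) and (iii) are then short. In (ii), $X^t_1=X^t_2=\emptyset$ forces $q^t_1=q^t_2=0$, so (i) gives $\mu+k(t)\d\in\supp(L_t)$ with $k(t)\in r\bbbz$; since $X^t_3\neq\emptyset$, Remark~\ref{rem-new}(ii) gives $\supp(L_t)=\supp(L_t)+2r\bbbz\d$, so reducing $k(t)$ modulo $2r$ puts $\mu$ or $\mu+r\d$ in $\supp(L_t)$; grouping by the value of $X^t_3$ (a symmetric closed subset, i.e.\ an element of $P$) and applying Lemma~\ref{Step1} bounds the number of such $t$ by $2d|P|$. In (iii), $X^t_2=\emptyset$ gives $\mu+q^t_1+k(t)\d\in\supp(L_t)$ from (i); using injectivity upward and local nilpotency downward of the $X^t_1$-roots I would normalize $q^t_1$ and the residual $\d$-shift to depend only on $X^t_1\in P'$ and $\mu$, and then Lemma~\ref{Step1} caps the number of $t$ with each such $X^t_1$ by $d$, giving $d|P'|$. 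For (iv), when $X^t_2$ is a nonempty proper subsystem of $\dot S\setminus\{0\}$ the algebra $\fk_{X^t_2}$ is finite-dimensional semisimple and $L_t$ is integrable over it (its $X^t_2$-roots being locally nilpotent in both directions), so $\supp(L_t)$ is $W(\fk_{X^t_2})$-invariant and one can move the $\fk_{X^t_2}$-component of a chosen weight into the closed fundamental domain, where it is $0$ or a minuscule weight of $\fk_{X^t_2}$ --- one of $m_{X^t_2}$ choices. Feeding this into (i) (to clear the $X^t_3$- and $X^t_4$-directions and push the residual $\d$-shift into one of at most $r$ classes), together with the $\le |P'|+1$ possibilities for $X^t_1$ and the $\le |P|$ possibilities for the subsystem $X^t_2=X$ (each carrying $m_X$ minuscule/zero weights), produces a weight of $\supp(L_t)$ lying in a list of size at most $(|P'|+1)rd\sum_{\dot S\setminus\{0\}\neq X\in P}m_X$, and Lemma~\ref{Step1} finishes.

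The hard part will be part (i), and inside it the hybrid directions $X^t_4$: unlike injective roots (which translate the support) and locally nilpotent roots (which reflect it), a hybrid root supplies only one-sided reflections and one-directional $\d$-translations, so clearing the $\sspan_\bbbz X^t_4$-component while simultaneously (a) keeping the $X^t_1$- and $X^t_2$-components in their spans and (b) keeping the accumulated $\d$-shift inside $r\bbbz$ demands careful tracking of the periodicities $r_{\dot\a}\mid r$ of (\ref{r}) and of the exact shifts produced by Remark~\ref{rem-new}(ii). A secondary subtlety is checking, in (iv), that the restriction of $L_t$ to $\fk_{X^t_2}$ is genuinely integrable (so that the reduction to a zero-or-minuscule weight is available) and that this reduction is compatible with the normalization of (i).
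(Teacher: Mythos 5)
Your overall strategy matches the paper's: prove (i) by first killing the full-injective $X^t_3$-component and then the hybrid $X^t_4$-component while collecting the $\d$-shift into $r\bbbz$, and prove (ii)--(iv) by pigeonhole via Lemma~\ref{Step1}. Parts (i) and (ii) track the paper closely. However, two of the steps you write down in (iii) and (iv) are false as stated, and in both cases the fix is exactly where the paper's actual work lives.

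In (iii) you propose to ``normalize $q^t_1$ and the residual $\d$-shift to depend only on $X^t_1\in P'$ and $\mu$.'' This is not possible: the coefficients entering $q^t_1$ are unbounded, so no such normalization exists in advance. The paper instead starts from a hypothetical set of $d+1$ contradictory indices, writes $q_1^t=\sum_i m_i^t(\dot\b_i+r_{\dot\b_i}n_i^t\d)$, uses full-injectivity of the $X^t_1$-roots (with $\d$-shifts chosen precisely to cancel $k(t)$ and the $\d$-part of $q_1^t$) to reach $\mu+\sum_i(m_i^t+1)\dot\b_i\in\supp(L_t)$, and only then sets $m:=\max_t|m_i^t+1|$ \emph{over that contradictory set}, pushing all $d+1$ supports to the common weight $\mu+m\sum_i\dot\b_i$. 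The maximum depends on the finite contradictory set and cannot be read off from $\mu$ and $X^t_1$ alone; your sketch elides exactly this.

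In (iv) you claim one can ``move the $\fk_{X^t_2}$-component of a chosen weight into the closed fundamental domain, where it is $0$ or a minuscule weight of $\fk_{X^t_2}$.'' This is false: Weyl-reflecting into the dominant chamber does not turn an arbitrary weight into zero or a minuscule one. The correct fact, which the paper uses, is that an integrable $\fk_{X^t_2}$-module with bounded weight multiplicities must contain \emph{some} weight that is zero or minuscule, and the paper extracts one from the explicit slice $\bigoplus_{\nu\in\Pi_t}L_t^\nu$. Moreover the paper splits (iv) into the two cases $X^t_1\neq\emptyset$ and ($X^t_1=\emptyset$, $X^t_3\cup X^t_4\neq\emptyset$), the latter requiring Remark~\ref{rem-new}(ii) for the $\pm2r\d$-translations, since when $X^t_1=\emptyset$ one loses the clearing mechanism of (iii); your sketch does not make this split. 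Finally, the residue $r_0(t)\in\{0,\ldots,r-1\}$ is not a leftover from (i) but the remainder modulo $r$ of the $\d$-coefficient of $\sum_i n_i^t\theta_i$ with $\theta_i\in R_0\cap(\dot\theta_i+\bbbz\d)$; it must be pigeonholed over before the zero/minuscule argument is applied, which is why the factor $r$ appears in the stated bound. These are essential, not cosmetic, ingredients, so as written the proposal would stall at (iii) and (iv).
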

\pf (i)
For each  $t$ with $\supp(L_t)\cap (\mu+Q_0)\neq \emptyset$, there are $q^t_i\in\sspan_{\bbbz}({R_0}\cap (X^{^t}_i+\bbbz\d))$ $(i=1,2,3,4),$ where  $q^t_i=0$ if $X_i^t=\emptyset$,  with 
\[\mu+q^t_1+q^t_2+q^t_3+q^t_4\in\supp(L_t).\] Since ${R_0}\cap (X^{^t}_3+\bbbz\d)\sub R_{\rm f-in}(L_t)\cap -R_{\rm f-in}(L_t)$, we get that 
$\mu+q^t_1+q^t_2+q^t_4\in\supp(L_t).$ Also, as $R_0\cap (X^{^t}_4+\bbbz\d)\sub R_{\rm hyb}(L_t),$ we find $k(t)\in r\bbbz$ such that $\supp(L_t)-q^t_4+k(t)\d\sub\supp(L_t),$
in particular, 
$\mu+q^t_1+q^t_2+k(t)\d\in\supp(L_t)$ as we desired.

(ii) It is enough to show that for a given nonempty  symmetric  subset $X$ (i.e., $X=-X$) of {$\dot S\setminus\{0\}$ such that $X\cup\{0\}$ is closed,}
 there are at most $2d$ indices $t$ with \[\supp(L_t)\cap (\mu+Q_0)\neq \emptyset,~~X^{^t}_2=X^{^t}_1=\emptyset\andd X^{^t}_3=X.\]
To the contrary, assume we have at least  $2d+1$ such  indices  $t_1,\ldots, t_{2d+1}.$ Keep the same notation as in part~(i), since  for $t=t_1,\ldots, t_{2d+1}$, $X^{^t}_2=X^{^t}_1=\emptyset,$  we have 
 $q^t_1=q^t_2=0$. 
Therefore, we have \[\mu+k(t)\d\in\supp(L_t)\quad (t=t_1,\ldots, t_{2d+1}).\]
Since $X^{^t}_3=X\neq \emptyset$ for $t=t_1,\ldots,t_{2d+1},$  using Remark~\ref{rem-new}(ii), we get  that for each $t=t_1,\ldots,t_{2d+1},$ either $\mu+\d\in \supp(L_t)$ or $\mu\in \supp(L_t),$ depending on $k(t)$ is odd or even. This implies that either $\mu$ or $\mu+\d$ belongs to at least $d+1$ modules $L_t.$ This   contradicts Lemma~\ref{Step1}.

(iii) 
It is enough to show that for a given nonempty subset $X$ of $\dot S\setminus\{0\}$ with $X\cap-X=\emptyset,$ there are at most $d$ indices $t$ with \[\supp(L_t)\cap (\mu+Q_0)\neq \emptyset,~~X^{^t}_2=\emptyset\andd X^{^t}_1=X.\] To the contrary, assume we have at least  $d+1$ such  indices; say e.g., $t_1,\ldots, t_{d+1}.$ For $t=t_1,\ldots, t_{d+1},$ since $X^{^t}_2=\emptyset,$  we have 
 $q^t_2=0$. So, for $k(t)$ as in  part~(i), we have \[\mu+k(t)\d+q^t_1\in\supp(L_t)\quad (t=t_1,\ldots,t_{d+1}).\]  
Assume \[X=\{\dot\b_1,\ldots,\dot\b_{s}\}.\] Since $q_1^{t}\in \sspan_\bbbz (R_0\cap (X+\bbbz\d))$ and $X\sub \dot S\setminus\{0\}\sub R,$ recalling (\ref{r}) in \S~\ref{intro}, we have $\dot\b_i+r_{\dot\b_i}\bbbz\d\sub R$ for $1\leq i\leq s,$ and that 
 there are 
$n_i^t, m_i^t \in\bbbz$ ($1\leq i\leq {s}$) such that \[q_1^t=m_1^t(\dot\b_1+r_{\dot\b_1}n_1^t\d)+\cdots+m_s^t(\dot\b_s+r_{\dot\b_s}n_s^t\d).\]  So,  \[\mu+k(t)\d+m_1^t(\dot\b_1+r_{\dot\b_1}n_1^t\d)+\cdots+m_s^t(\dot\b_s+r_{\dot\b_s}n_s^t\d)\in\supp(L_t).\] Since $\dot\b_1-k(t)\d-r_{\dot\b_1}m_1^tn_1^t,\dot\b_i-r_{\dot\b_i}m_i^tn_i^t\in R^{in}(L_t)$ for $2\leq i\leq s,$ this implies that 
 \[\mu+((m_1^t+1)\dot\b_1+\cdots+(m^t_{s}+1)\dot\b_{s})\in\supp(L_t)\quad(t=t_1,\ldots,t_{d+1}).\]
As {$\dot\b_1,\ldots,\dot\b_{s}\in R^{in}(L_t),$} for $m:={\rm max}\{|m_1^t+1|,\ldots,|m^t_{s}+1|\mid  t=t_1,\ldots,t_{d+1}\},$ we have 
\[\mu+(m\dot\b_1+\cdots+m\dot\b_{s})\in \supp(L_t)\quad(t=t_1,\ldots,t_{d+1}),\] which contradicts Lemma~\ref{Step1}. This completes the proof.

\smallskip

(iv)  Since $X^{^t}_2$ is a proper nonempty symmetric subset of $\dot S\setminus\{0\}$ such that  $X^{^t}_2\cup \{0\}$ is closed,  it is enough to assume 
$X$ is a proper nonempty symmetric   subset of $\dot S\setminus\{0\}$ such that  $X\cup \{0\}$ is closed and show that  there are at most $(1+|P'|)rdm_{_X}$ indices $t$ with $X^{^t}_2=X.$
Assume 
$X=\{\dot\theta_1,\ldots,\dot\theta_{{l}}\}$.  Since $\mu+k(t)\d+q_1^t+q_2^t\in\supp(L_t)$ and $q_2^t\in\sspan_\bbbz(R_0\cap (X+\bbbz\d))$, there are 
\[ \theta_{i}\in R_0\cap (\dot\theta_{i}+\bbbz\d)\andd n_{i}^t\in\bbbz \quad (1\leq i\leq {l})\] 
such  that 
 \[\mu+k(t)\d+q_1^t+(n_1^t\theta_1+\cdots+n^t_{{l}}\theta_{{l}})\in\supp(L_t).\] 
 Assume $n(t)$ is the coefficient of $\d$ in  $n_1^t\theta_1+\cdots+n^t_{{l}}\theta_{{l}}$. There are  $p(t)\in r\bbbz$  as well as $0\leq r_0(t)<r$ with  $ n(t)=p(t)+r_0(t)$. 
 We have  
 \begin{equation}\label{remainder}
 \mu+(k(t)+p(t))\d+q_1^t+(n_1^t\dot\theta_1+\cdots+n^t_{{l}}\dot\theta_{{l}})+r_0(t)\d\in\supp(L_t).
 \end{equation}
 Since $X^{^t}_2=X$ is a nonempty proper subset of $\dot S\setminus\{0\},$ either $X_1^{^t}\not=\emptyset$ or  $X_1^{^t}=\emptyset$ and $X_4^{^t}\cup X_3^{^t}\not=\emptyset.$
We shall show that the cardinalities of the sets 
\[\{t\mid X^{^t}_2=X,~~X_1^{^t}=Y \}\andd \{t\mid X^{^t}_2=X,~~X_1^{^t}=\emptyset,X_4^{^t}\cup X_3^{^t}\not=\emptyset\}\]
for each  nonempty {proper} subset $Y$ of $\dot S\setminus\{0\}$ with $Y\cap-Y=\emptyset$, 
are at most $rdm_{_X}$ and $2rdm_{_X}$ respectively.

\noindent {\bf Case~1.} $\bs{X_1^{^t}=Y}$ {\bf where} $\bs{Y}$ {\bf is a nonempty {proper} subset  of $\bs{\dot S\setminus\{0\}}$ with} $\bs{Y\cap-Y=\emptyset}$: To the contrary, assume there is a subset $T$ of cardinality $rdm_{_X}+1$ such that for $t\in T,$ $X_1^{^t}=Y$ and $X_2^{^t}=X$.  Assume $Y=\{\dot\b_1,\ldots,\dot\b_{{s}}\}.$ As above, 
\[q_1^t=m_1^t(\dot\b_1+r_{\dot\b_1}n_1^t\d)+\cdots+m_s^t(\dot\b_s+r_{\dot\b_s}n_s^t\d)\]  for some \[n_i^t, m_i^t \in\bbbz\quad (1\leq i\leq {s}).\] 
Recall (\ref{remainder}), since $\dot\b_1-(k(t)+p(t)+n_1^tm_1^tr_{\dot\b_1})\d, \dot\b_i-n_i^tm_i^tr_{\dot\b_i}\d\in R^{in}(L_t)$ for $1\leq i\le {s}$ and  $t\in T,$
we have 
 \[\mu+((m_1^t+1)\dot\b_1+\cdots+(m^t_{{s}}+1)\dot\b_{{s}})+(n_1^t\dot\theta_1+\cdots+n^t_{{l}}\dot\theta_{{l}})+r_0(t)\d\in\supp(L_t).\]
For $m:={\rm max}\{|m_1^t+1|,\ldots,|m^t_{{s}}+1|\mid t\in T\},$ since $\dot\b_i\in {R^{in}(L_t)}$ for $1\leq i\leq s$ and $t\in T,$ we have
\[\mu+(m \dot\b_1+\cdots+m\dot\b_{{s}})+(n_1^t\dot\theta_1+\cdots+n^t_{{l}}\dot\theta_{{l}})+r_0(t)\d\in\supp(L_t)\quad(t\in T).\]  But  cardinality  of $T$ is $rdm_{_X}+1,$ so, there is $0\leq r_0<r$ and a subset $T'$ of $T$ of cardinality $dm_{_X}+1$ such that for $t\in T'$, $r_0=r_0(t).$ Therefore, for 
\[ \Pi_t:=\{\mu+(m \dot\b_1+\cdots+m \dot\b_{{s}})+q+r_0\d\mid q\in \sspan_\bbbz X\},\]
we get that $\Bigop{\nu\in\Pi_t}{}L_t^{\nu}$ is an integrable  module over {finite dimensional semisimple} Lie algebra $\fk_{_{X}}=\Bigop{ \dot\a\in X}{}\fl^{\dot\a}\op\sum_{\dot\a\in X}[\fl^{\dot\a},\fl^{-\dot\a}]$ 
and so {it contains  a weight which is either zero or a minuscule weight.}  Since the cardinality of $T'$ is $dm_{_X}+1$,  there is $q_*\in\sspan_\bbbz X$ and indices $t_1,\ldots,t_{d+1}$ such that $\mu+(m\dot\b_1+\cdots+m\dot\b_{{s}})+q_*+r_0\d\in \Pi_t\cap \supp(L_t)$ for $t=t_1,\ldots,t_{d+1}$  which is a contradiction; see Lemma~\ref{Step1}.

\medskip

\noindent {\bf Case~2.} $\bs{X_1^{^t}=\emptyset$ \bf{and} $X_4^{^t}\cup X_3^{^t}\not=\emptyset}:$ To the contrary, assume there is  $T\sub\bbbz^{\geq 1}$ of cardinality $2rdm_{_X}+1$ such that for $t\in T,$  
 $X_1^{^t}=\emptyset$ and $X_4^{^t}\cup X_3^{^t}\not=\emptyset.$ Recalling (\ref{remainder}),
 there is $0\leq r_0<r$ and a subset $T'$ of $T$ of cardinality $2dm_{_X}+1$ such that for $t\in T'$, $r_0=r_0(t).$ 
 By Remark~\ref{rem-new}(ii), we get  \begin{equation}\label{new-new}
2r\d+\supp(L_t)\sub\supp(L_t)\hbox{ or } -2r\d+\supp(L_t)\sub\supp(L_t).
\end{equation} 
This together with (\ref{remainder}) and the fact that $q_1^{^t}=0$ ($t\in T$), implies that there is an integer $m$ such that for each $t\in T'$ either 
\[\mu+2m\d+(n_1^t\dot\theta_1+\cdots+n^t_{{l}}\dot\theta_{{l}})+r_0\d\in\supp(L_t)\] or 
\[\mu+(2m+1)\d+(n_1^t\dot\theta_1+\cdots+n^t_{{l}}\dot\theta_{{l}})+r_0\d\in\supp(L_t).\]  Therefore as the cardinality of $T'$ is  $2dm_{_X}+1$, there is a subset $T''$ of $T'$ of cardinality  $dm_{_X}+1$ such that either 
\[\mu+2m\d+(n_1^t\dot\theta_1+\cdots+n^t_{{l}}\dot\theta_{{l}})+r_0\d\in\supp(L_t)\quad(t\in T'')\] or 
\[\mu+(2m+1)\d+(n_1^t\dot\theta_1+\cdots+n^t_{{l}}\dot\theta_{{l}})+r_0\d\in\supp(L_t)\quad(t\in T'').\]
Now, using the same argument as in the previous case, we get  a  contradiction.
\qed

\begin{lem} \label{fin-13} Recall that  {$Q_{re}=\sspan_\bbbz R_{re}$} has the finite index in $Q=\sspan_\bbbz R$ and assume
$\supp(M)\sub\bigcup_{j=1}^{n_*}(\nu_j+Q_{re}).$
Then, there are  at most $4dn_*$ (which is independent of $C$) indices  $t$ such that $R_{re}=R_{\rm hyb}(L_t)$.
\end{lem}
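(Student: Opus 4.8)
Since $\supp(L_t)\subseteq\supp(M)\subseteq\bigcup_{j=1}^{n_*}(\nu_j+Q_{re})$ for every $t$, I would reduce to a per-coset count: it is enough to show that for each fixed $j\in\{1,\dots,n_*\}$ there are at most $4d$ indices $t$ with $R_{re}=R_{\rm hyb}(L_t)$ and $\supp(L_t)\cap(\nu_j+Q_{re})\neq\emptyset$, and then to sum over $j$. For such a $t$ one has $R_{\rm f-in}(L_t)=\emptyset$ and every real root is up- or down-nilpotent hybrid for $L_t$, so Remark~\ref{rem-new}(ii) yields $\supp(L_t)+2r\d\subseteq\supp(L_t)$ (I then call $t$ of \emph{type $D$}) or $\supp(L_t)-2r\d\subseteq\supp(L_t)$ (of \emph{type $U$}); a given $t$ may be of both types. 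So it suffices to bound the number of type-$D$ indices meeting $\nu_j+Q_{re}$ by $2d$, and symmetrically the type-$U$ ones.

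\textbf{Step 1: reducing the $Q_{re}$-part of a weight.} Fix $j$ and $t$ with $R_{re}=R_{\rm hyb}(L_t)$, and pick $\lambda\in\supp(L_t)\cap(\nu_j+Q_{re})$, written $\lambda=\nu_j+\sum_i n_i\beta_i$ with $\beta_i\in R_{re}$ and $n_i\in\bbbz$. Since each $\beta_i$ is hybrid for $L_t$, Remark~\ref{rem-new}(ii) provides a root $\gamma_i\in R\cap(\beta_i+\bbbz\d)$ and a sign $\epsilon_i\in\{1,-1\}$ with $\gamma_i+\epsilon_i r\d,\,-\gamma_i+\epsilon_i r\d\in R^{in}(L_t)$; hence $\supp(L_t)+\gamma_i+\epsilon_i r\d\subseteq\supp(L_t)$ and $\supp(L_t)-\gamma_i+\epsilon_i r\d\subseteq\supp(L_t)$. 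Writing $\beta_i=\dot\beta_i+m_i\d$ with $0\neq\dot\beta_i\in\dot R$, by $(\dagger)$ both $\gamma_i$ and $\beta_i$ lie in $\dot\beta_i+p_{\dot\beta_i}\d+r_{\dot\beta_i}\bbbz\d$, and $r_{\dot\beta_i}\mid r$, so $\gamma_i-\beta_i\in r\bbbz\d$; thus each of the two inclusions above has the form $\supp(L_t)\pm\beta_i+(r\bbbz)\d\subseteq\supp(L_t)$. Composing them --- subtracting $\beta_i$ the appropriate number of times when $n_i>0$ and adding it when $n_i<0$ --- I would obtain $\supp(L_t)-\sum_i n_i\beta_i+c(t)\d\subseteq\supp(L_t)$ for some $c(t)\in r\bbbz$. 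Applying this to $\lambda$ yields $\nu_j+c(t)\d\in\supp(L_t)$, and $\nu_j+c(t)\d\in\nu_j+Q_{re}$ because $r\d\in Q_{re}$ (by $(\dagger)$, $r_{\dot\a}\d\in Q_{re}$ for $0\neq\dot\a\in\dot R_{re}$ and $r_{\dot\a}\mid r$).

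\textbf{Step 2: periodicity and Lemma~\ref{Step1}.} Suppose $t_1,\dots,t_{d+1}$ were distinct type-$D$ indices meeting $\nu_j+Q_{re}$ with $c(t_1)\equiv\cdots\equiv c(t_{d+1})\pmod{2r}$, and put $c^*:=\max_k c(t_k)$. For each $k$ the difference $c^*-c(t_k)$ is a nonnegative multiple of $2r$, so $\supp(L_{t_k})+2r\d\subseteq\supp(L_{t_k})$ forces $\nu_j+c^*\d\in\supp(L_{t_k})$ for all $k$, contradicting Lemma~\ref{Step1}. Hence each residue class of $c(t)$ modulo $2r$ contains at most $d$ type-$D$ indices meeting $\nu_j+Q_{re}$; since $c(t)\in r\bbbz$, the residue $c(t)\bmod 2r$ can only be $0$ or $r$, so there are at most $2d$ such indices. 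The type-$U$ case is the same with $-\d$ in place of $\d$ and $c^*:=\min_k c(t_k)$. Therefore at most $4d$ indices $t$ with $R_{re}=R_{\rm hyb}(L_t)$ meet $\nu_j+Q_{re}$, and summing over $j=1,\dots,n_*$ gives the bound $4dn_*$.

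The step I expect to be the main obstacle is Step~1: the inclusions $\supp(L_t)\pm\beta_i+(r\bbbz)\d\subseteq\supp(L_t)$ must be composed in a fixed order, and the $\d$-shifts tracked carefully, so that the resulting $c(t)$ is genuinely a multiple of $r$ (the real roots $\beta$ with $2\beta\in R$, for which $r_{\dot\beta}$ may equal $4$, also need a moment's care). This is where the precise shape $(\dagger)$ of $R_{re}$ inside $\dot R+\bbbz\d$, together with the divisibilities $r_{\dot\a}\mid r$, is used, and where the standing exclusion of the types $A(1,2\ell-1)^{(2)}$ and $X^{(1)}$ --- which makes $[Q:Q_{re}]$, hence $n_*$, finite --- is implicitly needed.
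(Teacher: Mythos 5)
Your overall strategy is the same as the paper's (reduce to one $Q_{re}$-coset, push the $Q_{re}$-part of a weight back to a $\d$-multiple of $\nu_j$, then use $2r\d$-periodicity and pigeonhole with Lemma~\ref{Step1}), and it is a genuine variation in that you avoid invoking \cite[Lem.~5.6]{you8} (the uniform up/down-nilpotent dichotomy) by handling each $\beta_i$ with its own sign $\epsilon_i$. However, Step~1 contains a concrete error: from $\gamma_i,\beta_i\in\dot\beta_i+p_{\dot\beta_i}\d+r_{\dot\beta_i}\bbbz\d$ you get $\gamma_i-\beta_i\in r_{\dot\beta_i}\bbbz\,\d$, and since $r_{\dot\beta_i}\mid r$ the inclusion runs the wrong way: $r\bbbz\,\d\subseteq r_{\dot\beta_i}\bbbz\,\d$, not $r_{\dot\beta_i}\bbbz\,\d\subseteq r\bbbz\,\d$. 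When $r_{\dot\beta_i}$ is a proper divisor of $r$ (which happens, e.g., in $A(0,2\ell)^{(4)}$), the conclusion $\gamma_i-\beta_i\in r\bbbz\,\d$, and hence $c(t)\in r\bbbz$, simply fails. Consequently $c(t)$ can a priori hit any of the $2r$ residue classes modulo $2r$, and your count in Step~2 only yields a bound of order $4rdn_*$, not the stated $4dn_*$.

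The repair is to bypass the $\gamma_i$ supplied by Remark~\ref{rem-new}(ii) and instead shift each $\beta_i$ by multiples of $r\d$ directly: by $(\dagger)$ we have $r\bbbz\d+R\subseteq R$, and by Theorem~$\ddagger$(iii)/(iv), once one shifts far enough in the appropriate direction one lands in $R^{in}(L_t)$, i.e.\ $\pm\beta_i-\epsilon_i rm\d\in R^{in}(L_t)$ for $m$ large, with $\epsilon_i\in\{\pm1\}$ determined by whether $\beta_i$ is up- or down-nilpotent for $L_t$. With these shifts every $\d$-increment is a multiple of $r$, so $c(t)\in r\bbbz$ holds and your Step~2 goes through exactly as written. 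This is precisely what the paper does; it uses \cite[Lem.~5.6]{you8} to make all the $\epsilon_i$ equal, which is convenient but, as your $D/U$ bookkeeping shows, not strictly necessary.
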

\pf We know from \cite[Lem.~5.6]{you8} that if  $L_t$ ($t\in\bbbz^{\geq 1}$) is hybrid, it is either up-nilpotent hybrid or down-nilpotent hybrid. So, it is enough to  fix $1\leq j\leq n_*$ and  prove that   there are at most $2d$ indices $t\in\bbbz^{\geq 1}$ such that 
\begin{equation*}\label{1-1}
\supp(L_t)\cap (\nu_j+Q_{re})\neq \emptyset\andd (\dot R_{re}+\bbbz^{\ll 0}\d)\cap R\sub R^{in}(L_t)
\end{equation*} and that 
 there are at most $2d$ indices $t$ such that   
\begin{equation*}\label{1-2}
\supp(L_t)\cap (\nu_j+Q_{re})\neq \emptyset\andd (\dot R_{re}+\bbbz^{\gg 0}\d)\cap R\sub R^{in}(L_t).\end{equation*}
As the proofs of the  both cases are similar, we just prove the first one. To the contrary, assume there are distinct indices $i_1, i_2, \ldots, i_{2d+1}$ such that 
 \begin{equation}\label{New-MMM}(\dot R_{re}+\bbbz^{\ll 0}\d)\cap R\sub R^{in}(L_{i_t})\andd \supp(L_{i_t})\cap (\nu_j+Q_{re})\neq \emptyset\quad (t=1,\ldots,2d+1).\end{equation}
For each $1\leq t\leq 2d+1$, 
there are  real roots $\b^t_1,\ldots,\b^t_{s_t}$ such that 
\begin{equation}\label{1-3}
\nu_j+\b^t_1+\cdots+\b^t_{s_t}\in\supp(L_{i_t}).
\end{equation} We get from (\ref{r}) in \S~\ref{intro} together with (\ref{New-MMM}) that there are $k^t_1,\ldots,k^t_{s_t}\in \bbbz^{\geq 1}$ such that 
\[-\b_1^t-rk_1^t\d,\ldots,-\b_{s_t}^t-rk_{s_t}^t\d\in R^{in}(L_{i_t})\quad (t=1,\ldots,2d+1).
\] Therefore,  (\ref{1-3}) implies that for $k_t:=k^t_1+\cdots+k^t_{s_t},$ we have \[\nu_j-rk_t\d\in\supp(L_{i_t})\quad (t=1,\ldots,2d+1).\]
So, contemplating  Remark~\ref{rem-new}(ii), we get that  for large enough $k,$ either $\nu_j-2rk\d\in\supp(L_{i_t})$ or $\nu_j-r(2k+1)\d\in\supp(L_{i_t})$ $(t=1,\ldots,2d+1)$  depending on whether $k_t$ is even or odd respectively.  This  contradicts  Lemma~\ref{Step1} and so we are done.\qed
\begin{Thm}\label{rank-unconfined}  Assume $M$ is a zero-level admissible $\fl$-module. Then, there is a bound for the un-confined ranks of admissible chains of submodules of {$M$}; that is,
there is a positive integer $\frak{l}$ such that for each 
admissible chain $C$ of submodules of $M,$ $R_{_C}\leq \frak{l}$. 
\end{Thm}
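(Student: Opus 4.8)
The strategy is to bound the un-confined rank $R_C$ of an arbitrary admissible chain $C$ by a constant depending only on $M$ (through $\mathfrak{b}$, the root data $\dot R$, and the finitely many $Q_{re}$- and $Q_0$-cosets meeting $\supp(M)$), by showing that each of the finitely many ``types'' of un-confined simple subquotient $L_t$ can occur only a bounded number of times along $C$. First I would fix finite sets of coset representatives: write $\supp(M)\subseteq\bigcup_{j=1}^{n_*}(\nu_j+Q_{re})$ and, refining, $\supp(M)\subseteq\bigcup_{i=1}^{m_*}(\mu_i+Q_0)$, which is possible since $Q_{re}$ and $Q_0$ have finite index in $Q$ and $\supp(M)$ lies in a single $Q$-coset. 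Since any $L_t$ that appears in $C$ satisfies $\supp(L_t)\subseteq\supp(M)$, each $L_t$ has its support meeting one of these cosets. Then for each un-confined $L_t$, its support must contain a weight with nonzero restriction to $\dot\fh$ (Example~\ref{exa-first-m} and Definition~\ref{admissible-chain-M}); using the decomposition (\ref{decom-sec}) applied to $L_t$, exactly one of the configurations analyzed in Lemmas~\ref{fin-12}, \ref{fi-10}, \ref{fin-13} must occur.

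Next I would do the bookkeeping across these cases. Fix one coset $\mu_i+Q_0$ (and, where relevant, the finer information of which $\nu_j+Q_{re}$ is involved) and count the indices $t$ with $\supp(L_t)\cap(\mu_i+Q_0)\neq\emptyset$ split according to the type of $L_t$: (a) $R_{re}=R_{\rm hyb}(L_t)$ — bounded by Lemma~\ref{fin-13}; (b) $L_t$ un-confined integrable (the case $R_{re}=R_{\rm f-ln}(L_t)$, i.e. $X_2^t=\dot S\setminus\{0\}$) — bounded by Lemma~\ref{fin-12}, after noting that $\supp(L_t)\cap(\mu_i+Q_0)$ either contains a weight restricting to $0$ on $\dot\fh$, handled by part~(i), or to a nonzero weight $q$ of $\dot\fg$, handled by part~(ii), and there are only finitely many minuscule-or-zero such $q$ to quantify over; (c) $X_3^t\neq\emptyset$ with $X_1^t=X_2^t=\emptyset$ — bounded by Lemma~\ref{fi-10}(ii); (d) $X_2^t=\emptyset$ but $X_1^t\neq\emptyset$ — bounded by Lemma~\ref{fi-10}(iii); (e) $X_2^t$ a nonempty proper subset of $\dot S\setminus\{0\}$ — bounded by Lemma~\ref{fi-10}(iv). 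I would check that these five cases exhaust all possibilities for an un-confined $L_t$: by (\ref{decom-sec}) the set $\dot S\setminus\{0\}$ decomposes as $X_1^t\uplus(-X_1^t)\uplus X_2^t\uplus X_3^t\uplus X_4^t$; if $X_2^t=\dot S\setminus\{0\}$ we are in (b), if $X_2^t$ is a nonempty proper subset we are in (e), and if $X_2^t=\emptyset$ then either $X_1^t\neq\emptyset$ (case (d)) or $X_1^t=\emptyset$ in which case $X_3^t\cup X_4^t=\dot S\setminus\{0\}$; if additionally $X_3^t\neq\emptyset$ we are in (c), and if $X_3^t=\emptyset$ then $X_4^t=\dot S\setminus\{0\}$ forces $R_{re}=R_{\rm hyb}(L_t)$, case (a). Since in each case the number of admissible such indices $t$ (for fixed coset) is bounded by a constant independent of $C$, summing over the finitely many cosets $\mu_i+Q_0$ and over the five cases gives a constant $\mathfrak{l}=\mathfrak{l}(M)$ with $R_C\leq\mathfrak{l}$.

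The one subtlety I would be careful about is the interaction between the two layers of cosets: Lemmas~\ref{fin-12} and \ref{fi-10} are stated for a fixed $\mu\in\fh^*$ and count indices $t$ with $\supp(L_t)\cap(\mu+Q_0)\neq\emptyset$, while Lemma~\ref{fin-13} is stated in terms of $Q_{re}$-cosets; so I would first refine the $Q_{re}$-cover of $\supp(M)$ to a common $Q_0$-cover (possible since $Q_0$ also has finite index in $Q$) so that every $L_t$ is attached to at least one $\mu_i+Q_0$, and for the hybrid case simply note that the $Q_{re}$-coset containing a weight of $L_t$ is determined among finitely many. The main obstacle — and the part the earlier lemmas were designed to absorb — is precisely the verification that the case division (a)--(e) is exhaustive and that ``un-confined'' forces $L_t$ into one of the cases with a genuine bound; once that is in hand, the theorem is a finite sum of the bounds already established, so the remaining work is purely arithmetic and poses no essential difficulty.
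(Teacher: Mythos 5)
Your proposal follows essentially the same route as the paper: partition the un-confined subquotients $L_t$ according to the decomposition of $\dot S\setminus\{0\}$ from (\ref{decom-sec}), and count each type using Lemmas~\ref{fin-12}, \ref{fi-10} and \ref{fin-13} across the finitely many $Q_0$- and $Q_{re}$-cosets meeting $\supp(M)$; your cases (a), (c), (d), (e) taken together are exactly the paper's collection of indices with $R^{in}(L_t)\neq\emptyset$, and (b) is the paper's integrable un-confined case. The one spot where you are a bit light is (b): to apply Lemma~\ref{fin-12}(ii) and then sum, the relevant $q$'s must range over a fixed finite set, and the justification --- which the paper records explicitly --- is that the restriction of an integrable $L_t$ to a $Q_0$-coset is a sum of integrable finite weight $\dot\fg$-modules, hence necessarily contains a weight that is zero or a minuscule weight of $\dot\fg$; you assert that the $q$'s are ``minuscule-or-zero'' but do not derive this from integrability, which is the observation the paper supplies before invoking the lemma.
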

\pf  Contemplating Example~\ref{exa-first-m}, we need to show that  there is a positive integer $\frak{l}$ such that 
for each admissible chain 
 \[C: \cdots\sub N_2\subsetneq M_2\sub N_1\subsetneq M_1\sub M\] of submodules of $M$ and 
\[L_t(C):=M_t/N_t\quad\quad(t=1,2,\ldots),\] the cardinality 
\[\{t\in\bbbz^{\geq 1}\mid R^{in}(L_t(C))\neq \emptyset\}\cup\{t\in\bbbz^{\geq 1}\mid R^{ln}(L_t(C))=R_{re} \hbox{ and $L_t(C)$ is un-confined}\} \] is at most $\frak{l}.$
Since $\supp(M)$ lies in a single $Q$-coset and  {$Q_0$ as well as $Q_{re}$} have  finite indices  in $Q$, 
 we have 
\[
\supp(M)\sub\bigcup_{j=1}^{m_*}(\mu_j+Q_0)\andd \supp(M)\sub\bigcup_{j=1}^{n_*}(\nu_j+Q_0)\]
in which $m_*$  and $n_*$ are positive integers and $\mu_j$'s as well as $\nu_j$'s are elements of $\fh^*.$ 
Using Lemmas~\ref{fi-10}~\&~\ref{fin-13}, for each $1\leq j\leq m,$ there is a positive integer $\frak{l}_j$ such that for each admissible chain $C$ of submodules of $M$, the cardinality  
\[\{t\in\bbbz^{\geq 1}\mid R^{in}(L_t(C))\neq \emptyset\}\] is at most $4dn_*+\sum_{j=1}^{m_*}\frak{l}_j.$
Next recall (\ref{S}) and set \[{\dot\fg:=\Bigop{0\neq \a\in \dot S}{} \fl^\a\Bigop{}{}\sum_{0\neq \a\in \dot S}[ \fl^\a, \fl^{-\a}]}.\] Assume \[\{q_0,\ldots,q_l\}\] is the set of weights of $\dot\fg$ which  are either zero or minuscule weight of $\dot\fg.$  
Now let, for some admissible chain $C$ of submodules of $M$ and a positive integer $t$, $L_t:=L_t(C)$ be an  integrable un-confined  module. We have 
\[\displaystyle{L_t=\Bigop{\mu\in\supp(M)}{}L_t^\mu=\sum_{q\in \sspan_{\bbbz}\dot S}\sum_{j=1}^{m_*}\sum_{\substack{m\in \bbbz\\
q+m\d\in Q_0}}L_t^{\mu_j+q+m\d}}.\] If $1\leq j\leq m_*$ and  $\displaystyle{\sum_{m\in \bbbz}\sum_{q\in \sspan_{\bbbz}\dot S}L_t^{\mu_j+q+m\d}}\neq \{0\}$ (that is, $\mu_j+Q_0$ intersects $\supp(L_t)$ nontrivially), it  is a summation of  integrable finite weight modules over $\dot\fg$; in particular, as a  $\dot\fg$-module, $L_t$ contains a weight $q\in\{q_i\mid 0\leq i\leq l\}$.
Using  Lemma~\ref{fin-12}, there is a positive integer $\frak{l}'_{i,j}$ (independent of $C$) such that the number of such $t$'s is at most $\frak{l}'_{i,j}$. This completes the proof.
\qed

\begin{cor}\label{cor--1}
 Suppose that $M$ is a zero-level  admissible $\fl$-module 
and that $M$ is equipped with an  admissible chain 
of submodules of $M$, then, there is  a positive integer $k$ and a  tail-$k$-confined admissible chain of submodules of $M$.
\end{cor}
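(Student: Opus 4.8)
\textbf{Proof plan for Corollary~\ref{cor--1}.} The plan is to exploit Theorem~\ref{rank-unconfined}, which provides a uniform bound $\frak{l}$ on the un-confined ranks of all admissible chains of submodules of $M$. First I would start with the given admissible chain $C$ of submodules of $M$ and consider the family of all its $k$-overlapped tail-extensions as $k$ varies over $\bbbz^{\geq 1}$; more precisely, I would look for a single $k$ with the property that no tail-extension agreeing with $C$ on the first $k$ terms can produce a new un-confined quotient beyond position $k$. The existence of such a $k$ is exactly what ``tail-$k$-confined'' demands, and it is the statement we must establish.

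The key step is the following observation: if there were \emph{no} such $k$, then for every $k$ there would be a $k$-overlapped tail-extension $C^{(k)}$ of $C$ and an index $i_k > k$ with $L_{i_k}(C^{(k)})$ un-confined. Using this I would construct, by a diagonal/greedy argument, a single admissible chain $\widetilde C$ of submodules of $M$ having un-confined rank strictly larger than $\frak{l}$: concretely, choose $k_0$ so that $C$ already realizes all the un-confined quotients it is going to (finitely many, since $R_C \leq \frak{l}$), then pick a tail-extension of $C$ past $k_0$ that adds a new un-confined quotient at some position $i_1$; now treat the truncation of that chain at $i_1$ as the new ``$C$'', extend again to add an un-confined quotient at $i_2 > i_1$, and iterate $\frak{l}+1$ times. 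Each step strictly increases the number of indices $i$ with $L_i$ un-confined, because the tail-extension is required to \emph{agree} with the previous chain up to the splitting point, so no previously counted un-confined quotient is lost; after $\frak{l}+1$ iterations we obtain an admissible chain of un-confined rank at least $R_C + \frak{l} + 1 > \frak{l}$, contradicting Theorem~\ref{rank-unconfined}. Hence some $k$ works, and by construction every $k$-overlapped tail-extension of $C$ is $k$-confined, i.e.\ $C$ (after possibly truncating to ensure it is itself $k$-confined, which we may arrange since its finitely many un-confined quotients all occur among the first finitely many terms) is tail-$k$-confined.

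I expect the main obstacle to be the bookkeeping in the diagonal construction: one must be careful that ``$k$-overlapped tail-extension'' is a transitive enough relation that the chains produced at successive stages can be glued into one genuine admissible chain of submodules of $M$, and that the un-confined quotients accumulated at earlier stages are literally preserved (not merely ``confined-equivalent'') in the later chains. Once the definitions in Definition~\ref{admissible-chain-M} are unwound carefully, this gluing is routine, and the finiteness input is entirely supplied by Theorem~\ref{rank-unconfined}. A minor point to address at the end is replacing $C$ by an honest $k$-confined truncation: since $R_C \leq \frak{l} < \infty$, the set of $i$ with $L_i(C)$ un-confined is finite, so for $k$ larger than its maximum the chain $C$ is automatically $k$-confined, and enlarging $k$ only strengthens the tail-extension requirement, so tail-$k$-confinedness is preserved.
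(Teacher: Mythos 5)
Your proposal is correct and essentially follows the paper's own argument: the paper runs the identical counting idea as a downward induction on $\frak{l}-R_{_C}$, choosing the overlap point $i_{_C}$ just past the last un-confined quotient of the current chain so that any non-$i_{_C}$-confined tail-extension strictly increases $R$, whereas you phrase the same bounded iteration as a proof by contradiction; the mechanism is the same. The \emph{gluing} concern you raise is a non-issue, since by Definition~\ref{admissible-chain-M} each tail-extension produced at every stage is already an honest admissible chain of submodules of $M$, and the final \emph{truncation} step is also unnecessary because a tail-$k$-confined chain is automatically $k$-confined (it is a $k$-overlapped tail-extension of itself).
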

 \begin{proof}
 Suppose that 
  \[C:\cdots\sub N_2\subsetneq M_2\sub N_1\subsetneq M_1\sub M\]  is an admissible chain of submodules of $M$.
 Recall $\frak{l}$ from Proposition~\ref{rank-unconfined} and set 
 \[i_{_C}:={\rm min}\{i\in\bbbz^{\geq 1}\mid \hbox{$M_j/N_j$ is confined for $j\geq i$}\}.\] Therefore,  $C$ is $i_{_C}$-confined. To carry out the proof, we use induction on $\frak{l}-R_{_C}$ where $R_{_C}$ denotes the un-confined rank of $C$. We first assume  $R_{_C}=\frak{l}.$ Then, by Proposition~\ref{rank-unconfined}, $C$ is tail-$i_{_C}$-confined  and so we are done. 
Next, assume $k:=\frak{l}-R_{_C}$ is a positive integer. If $C$ is tail-$i_{_C}$-confined, we are done, otherwise, there is a $i_{_C}$-overlapped tail-extension $C'$ of $C$   which is not $i_{_C}$-confined, this gives that $R_{_{C'}}>R_{_C}$ and so, we are done using the induction hypothesis.
\end{proof}

 \begin{Thm}\label{cont-simple}
  Suppose that $M$ is a zero-level admissible $\fl$-module. 
Then,
 $M$ contains a simple submodule.
 \end{Thm}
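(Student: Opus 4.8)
The plan is to build an admissible chain of submodules of $M$ and use the finiteness provided by Corollary~\ref{cor--1} to force the chain to terminate in a simple submodule. More precisely, I would argue by contradiction: suppose $M$ has no simple submodule. Starting from any nonzero weight vector $v_1\in M^{\mu_1}$, the cyclic submodule $M_1:=U(\fl)v_1$ is a nonzero weight-generated submodule, and since it is not simple it has a nonzero proper submodule; one then extracts a maximal submodule $N_1\subsetneq M_1$ not containing $v_1$ (using that $v_1$ is a weight vector, so Zorn's lemma applies to submodules of $M_1$ missing $v_1$), so that $L_1:=M_1/N_1$ is simple. Because $M$ has no simple submodule, $N_1\neq\{0\}$, and we can repeat the construction inside $N_1$: pick a nonzero weight vector $v_2\in N_1$, set $M_2:=U(\fl)v_2\subseteq N_1$, take a maximal submodule $N_2\subsetneq M_2$ missing $v_2$, and so on. This produces an infinite admissible chain $C:\ \cdots\subseteq N_2\subsetneq M_2\subseteq N_1\subsetneq M_1\subseteq M$.

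Now invoke Corollary~\ref{cor--1}: there is a positive integer $k$ and a tail-$k$-confined admissible chain $C''$ of submodules of $M$. The point is that one can arrange that every simple quotient $L_t=M_t/N_t$ of our chain past some stage is \emph{confined}, i.e., $\lam|_{\dot\fh}=0$ for all $\lam\in\supp(L_t)$. By Example~\ref{exa-first-m}, a confined module $L_t$ has $R^{in}(L_t)=\emptyset$, hence $R_{re}=R^{ln}(L_t)$ by the Theorem~$\ddagger$ dichotomy (combined with $R_{re}=R^{in}(V)\cup R^{ln}(V)$ for simple finite weight modules, recalled in the introduction) — each such $L_t$ is integrable. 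The heart of the argument is then to show that these confined integrable simple subquotients cannot persist indefinitely: a confined integrable simple $\fl$-module of level zero, whose support lies entirely in the ``imaginary direction'' $\mu+\bbbz\d$ relative to $\dot\fh$, must be a module that is ``too small'' to support an infinite strictly decreasing chain inside $M$. I expect the cleanest route is to observe that a confined module is essentially a module over $\bigoplus_{m\in\bbbz}\fl^{m\d}$ modulo the center, and to use the finite-dimensionality of weight multiplicities (bounded by $\mathfrak{b}$) together with the structure of these ``loop-type'' pieces to bound how many confined simple subquotients can appear with support meeting a fixed $\bbbz\d$-coset; this is exactly the kind of counting carried out in Lemmas~\ref{Step1}–\ref{fin-13}, now applied to the confined case.

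Concretely, after passing to the tail-$k$-confined chain, discard the first $k$ terms; we obtain an infinite admissible chain all of whose simple quotients $L_t$ are confined. For a fixed $\mu\in\supp(M)$, Lemma~\ref{Step1} says $\mu$ lies in the support of at most $d=\mathfrak{b}+1$ of the $L_t$. On the other hand, a confined simple zero-level $\fl$-module $L$ has support contained in a single coset $\nu+\bbbz\d$ with $\nu|_{\dot\fh}=0$; since $\supp(M)$ meets only finitely many such cosets (because $Q_0$ has finite index in $Q$), and each $L_t$ is simple integrable hence — via the classification recalled for the case $R_{re}=R_{\rm f-ln}(V)$, item~(iii) in the introduction, or directly via the action of the reflections $\sg_{\dot\eta+rm\d}$ used throughout Lemma~\ref{fin-12} — has support filling up an entire such coset (or at least an infinite arithmetic progression in it), each fixed weight $\mu$ in that coset is hit by each such $L_t$. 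Combining, only finitely many confined $L_t$ can have support meeting a given $\bbbz\d$-coset, hence only finitely many $L_t$ altogether, contradicting the infiniteness of the chain. This contradiction shows $M$ must contain a simple submodule.

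The main obstacle I anticipate is the precise handling of the confined simple subquotients: one must verify that a confined simple zero-level module genuinely has ``full'' support along its $\bbbz\d$-coset (so that the Lemma~\ref{Step1} counting bites), which relies on the integrability forced by $R^{in}=\emptyset$ and on the $W$-invariance of supports for integrable modules, including the subtle $r\bbbz\d$-periodicity in $(\dagger)$ for the twisted types. A secondary subtlety is making the chain-construction genuinely produce an \emph{admissible} chain in the technical sense of Definition~\ref{admissible-chain-M} — in particular that each $M_t$ is generated by a single weight vector and that the $N_t$ can be chosen maximal-missing-$v_t$ so that $M_t/N_t$ is simple — but this is routine given that weight vectors generate weight submodules and Zorn's lemma applies. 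Once the counting in the confined case is in place, Corollary~\ref{cor--1} does the rest of the work.
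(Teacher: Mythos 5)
Your proof follows the paper's strategy through the construction of the admissible chain and the invocation of Corollary~\ref{cor--1} to pass to a tail-$k$-confined chain. However, your argument diverges at the crucial final step, and the divergence introduces a genuine gap.

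You reduce to an infinite chain all of whose simple quotients $L_t$ are confined, and then try to derive a contradiction by a counting argument resting on the claim that a confined simple zero-level module ``genuinely has `full' support along its $\bbbz\d$-coset.'' This claim is false. A confined weight module has every $\fl^\a$ with $\a|_{\dot\fh}\neq 0$ acting trivially (applying such an $\fl^\a$ to a weight vector $v$ would move the weight off the hyperplane $\lam|_{\dot\fh}=0$), so a confined module is effectively a module over the subalgebra $\Bigop{m\in\bbbz}{}\fl^{m\d}$ together with $\fh$. In particular, one-dimensional modules (e.g.\ the trivial module at any fixed weight $\nu$ with $\nu|_{\dot\fh}=0$) are confined and simple, and their support is a single point — not a $\bbbz\d$-progression. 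If the tail of the chain consisted of one-dimensional confined simple quotients $L_t$ supported at pairwise distinct weights in $\nu+\bbbz\d$, your counting (via Lemma~\ref{Step1} plus the finiteness of the cosets) would give no contradiction at all: each weight appears in at most $d$ supports, but infinitely many distinct weights can each host a distinct $L_t$. Admissibility bounds multiplicities, not the size of $\supp(M)\cap(\nu+\bbbz\d)$, so the argument does not close.

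The paper avoids this by a structural dichotomy on the generating weight vector $v_{i_*+1}$ (of weight $\mu_{i_*+1}$ with $\mu_{i_*+1}|_{\dot\fh}=0$) of the first confined piece: either $\fl^\a v_{i_*+1}=\{0\}$ for all $\a\in R^\times$, in which case $U(\fl)v_{i_*+1}=\bbbc v_{i_*+1}$ by \cite[Lem.~3.7(ii)]{DSY} and $M$ contains the one-dimensional simple submodule $\bbbc v_{i_*+1}$; or some $\fl^\b v_{i_*+1}\neq\{0\}$ with $\b\in R^\times$, and then $w\in\fl^\b v_{i_*+1}$ has weight $\mu_{i_*+1}+\b$ with $(\mu_{i_*+1}+\b)|_{\dot\fh}\neq 0$, so the cyclic module $U(\fl)w\subseteq N_{i_*}$ has a simple quotient that is \emph{un}-confined. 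Splicing this into the chain at position $i_*+1$ would yield an $i_*$-overlapped tail-extension of $C$ that is not $i_*$-confined — contradicting tail-$i_*$-confinedness — unless $U(\fl)w$ already contains a simple submodule. This two-case argument, not a count, is what makes the confined tail terminate; your proposal is missing exactly this step, and the counting you substitute for it does not survive the one-dimensional case.
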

 \begin{proof}
 We first notice that 
 \begin{equation}\label{MM-2}
 \parbox{5.5in}{
 if $v\in M$ is a nonzero weight vector, then  $U(\fl)v$ contains a maximal submodule $Z$.  If  $Z=\{0\}$, we get that $U(\fl)v$ is a simple submodule of $M,$ otherwise, $Z$ contains a nonzero weight vector $w$ and $U(\fl) w$ contains a maximal submodule.}
 \end{equation} Using (\ref{MM-2}) repeatedly, by starting from a  nonzero weight vector $v_1\in M,$ we  get either a simple submodule, which completes the proof, or an admissible chain of submodules. In the last case,  by Corollary~\ref{cor--1}, there is a positive integer $i_*$ such that  $M$ is equipped with a tail-$i_*$-confined chain 
 \[C:\cdots\sub N_2\subsetneq M_2\sub N_1\subsetneq M_1\sub M\]  of submodules of $M$.
For each $i\geq 1,$ since $C$ is admissible, we have   $M_i=U(\fl)v_i$ for some   nonzero weight vector $v_i$ of weight $\mu_i$. We recall that $\dot\fh=\sspan_{\bbbc}(R_0\cap\dot R)$ and  mention that as $C$ is $i_*$-confined, we have \[\mu_{i_*+1}\mid_{\dot\fh}=0.\]  

We complete the proof through the following two cases:

$\bullet$ $\fl^\a v_{i_*+1}=\{0\}$ for all  $\a\in R^\times$: In this case, using Lemma~3.7(ii) of \cite{DSY}, we get that the submodule of $M$ generated by $v_{i_*+1}$ equals $\bbbc v_{i_*+1}$ and so it is simple. This completes the proof in this case.

$\bullet$ There is  $\b\in R^\times$ with  $\fl^\b v_{i_*+1}\not =\{0\}:$
We pick $0\neq w\in\fl^\b v_{i_*+1}\sub M^{\mu_{i_*+1}+\b}$. The submodule  $M'_{i_*+1}:=U(\fl)w\sub M_{i_*+1}\sub N_{i_*}$ contains a maximal submodule $N'_{i_*+1}.$  
 
Since $\mu_{i_*+1}+\b$ is a weight of $M'_{i_*+1}/N'_{i_*+1}$ and  satisfies $\mu_{i_*+1}+\b|_{\dot\fh}\neq 0$,  we get that $M'_{i_*+1}/N'_{i_*+1}$ is not confined.
Considering 
\[C': N'_{i_*+1}\subsetneq M'_{i_*+1}\sub N_{i_*}\subsetneq M_{i_*}\sub \cdots\sub N_2\subsetneq M_2\sub N_1\subsetneq M_1\sub M,\]
we conclude  that $ M'_{i_*+1}$  (and so $M$) contains a simple submodule as otherwise using  the chain $C'$ together with the fact (\ref{MM-2}), we obtain  an $i_*$-overlapped  tail-extension  of $C$ which is not $i_*$-confined due to the choice of $i_*.$
\end{proof}

\section{Twisted Localization and end of the characterization}\label{local}
Throughout this section, we keep the same notations as in the text.  The following proposition is the ``super"  version of a well-known result in the non-super case; see \cite[\S~4]{Mat} for the finite case and \cite[Thm.~3.35]{DG} for the affine case. 
{Although this proposition resembles the non-super case, its central novelty lies in the differences in the proof that every  admissible module with trivial action of the canonical central element of $\fl$, contains a simple submodule~-~a result we have already proven in Theorem~\ref{cont-simple} in our setting.}

\begin{Pro}\label{localization-main}
Suppose that $M$ is a {simple} admissible    $\fl$-module of level zero and $\a\in R_0\cap R_{re}$ such that $\pm\a\in R^{in}(M)$. Then, there is  a zero-level {simple} admissible    $\fl$-module $\Omega$ such that 
\begin{itemize}
\item[(1)] $M$ is a twisted localization of $\Omega$ using a complex number,
\item[(2)] $\a\in R^{ln}(\Omega),$ 
\item[(3)] for $\b\in R_{re},$ if $\b\in R^{ln}(M)$, then $\b\in R^{ln}(\Omega)$.
\end{itemize}
\end{Pro}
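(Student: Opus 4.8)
The plan is to construct $\Omega$ directly via twisted localization of $M$ and then verify the three properties, following the template of \cite[\S~4]{Mat} and \cite[Thm.~3.35]{DG} but using Theorem~\ref{cont-simple} at the crucial point. First I would form the localized module $D_\a(M) = U_\a \otimes_U M$. Since $\pm\a \in R^{in}(M)$ and $-\a \in R_0$ (because $\a \in R_0 \cap R_{re}$ and $R_0 = -R_0$), and $M$ is a finite weight module, the observations collected in Section~\ref{admissible} give $D_\a(M) \simeq M$ as $U$-modules; moreover $\a$ acts \emph{invertibly} on $D_\a(M)$, i.e.\ $f:=$ (a root vector for $-\a$, or rather for $\a$ depending on conventions) together with its formal inverse both act. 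The point of passing to $U_\a$-modules is that on $D_\a^x(M) = (D_\a(M))(x)$ the support gets shifted by $-x\a$, and for a suitable choice of $x$ the root $\a$ will act locally nilpotently.

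The key step is choosing $x$. On $D_\a(M)$, which is a $U_\a$-module with $\a$ acting bijectively, pick a weight $\lambda \in \supp(M)$; the $\mathfrak{sl}_2$-type subalgebra attached to $\a$ acts, and after twisting by $\Theta_\a^x$ the action of $f^{\pm 1}$ is conjugated so that the ``$\a$-string'' structure changes. Concretely, one wants to choose $x \in \bbbc$ so that $D_\a^x(M)$ has a weight $\mu$ with $\mu(\a^\vee) \in \bbbz^{\geq 0}$ along the relevant string and so that the highest-weight vector of that string is annihilated by the positive $\a$-root vector after an appropriate power — the standard Mathieu argument shows that for all but finitely many $x$ (in fact for $x$ in a coset of $\bbbz$) one gets $\a \in R^{ln}(D_\a^x(M))$. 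This is where I would invoke the explicit formula $(\Omega(x))^{\lambda - x\a} = \Omega^\lambda$ and the binomial-series definition of $\Theta_\a^x$: for $x$ a negative integer shift combined with a generic complex part, the twisted module is annihilated by a power of the $\a$-root vector on each weight space, using that weight multiplicities are uniformly bounded. Set $\Omega := D_\a^x(M)$ for such an $x$; then (1) holds by definition of twisted localization (Definition after the three bullet points), $M \simeq D_\a^{-x}(\Omega)$, and $\Omega$ is admissible of level zero because twisted localization shifts the support within a fixed $Q$-coset (as $\a \in Q$) and preserves weight-multiplicity bounds and the central character. Since $M$ is simple and $\a \in R^{in}(M)$, $\Omega = D_\a^x(M)$ is a simple $U_\a$-module; but I need $\Omega$ simple as a $U$-module (as an $\fl$-module). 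Here is where Theorem~\ref{cont-simple} enters: $\Omega$ is a zero-level admissible $\fl$-module, hence contains a simple $\fl$-submodule $\Omega_0$; one then checks $\Omega_0$ is the module we want — replacing $\Omega$ by $\Omega_0$, it is still admissible, still a twisted localization of (a twisted localization of) $M$, and in fact since $\a$ acts bijectively on $\Omega$ one recovers $\Omega$ back from $\Omega_0$ by $D_\a$, forcing $\Omega_0 = \Omega$ is impossible unless $\Omega$ is already simple; the correct statement is that $M$ is recovered as $D_\a^{-x}(\Omega_0)$ up to the localization equivalence, so we may and do take $\Omega := \Omega_0$.

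For (2), that $\a \in R^{ln}(\Omega)$ is exactly the property for which $x$ was chosen. For (3), suppose $\b \in R_{re}$ with $\b \in R^{ln}(M)$; I would argue that applying $D_\a^x$ cannot destroy local nilpotency of $\b$-root vectors. Using Theorem~$\ddagger$ applied to the simple module $M$: along each string $\b + \bbbz\d$ the behaviour of $\b$-root vectors is one of the four rigid patterns (i)–(iv), and the case $\b \in R^{ln}(M)$ together with $\pm\a \in R^{in}(M)$ restricts which pattern $\b$ can be in; the twist $\Theta_\a^x$ only conjugates by powers of $f$ (an $\a$-root vector), and since $\mathrm{ad}(f)$ is locally nilpotent, $\mathrm{ad}(f)$-conjugation sends a locally nilpotent operator to a locally nilpotent operator on each finite-dimensional weight space — more precisely, if $x_\b \in \fl^\b$ acts locally nilpotently on $M$ then $\Theta_\a^x(x_\b)$ still acts locally nilpotently on $\Omega$ because the series $\sum_i \binom{x}{i}\mathrm{ad}(f)^i(x_\b) f^{-i}$ is a finite sum of operators each of which shifts weights by $\b - i\a$ and each is, up to invertible factors, built from powers of $x_\b$. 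Combined with the fact that $\Omega$ is a finite weight module, local nilpotency of $x_\b$ on all of $M$ transfers to $\Omega$.

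The main obstacle I anticipate is the passage from $U_\a$-simplicity of $D_\a^x(M)$ to $\fl$-simplicity of $\Omega$: in the non-super affine case \cite{DG} this is handled by Mathieu's finite-length argument, which fails here, and the whole point of Section~\ref{admissible} (culminating in Theorem~\ref{cont-simple}) is to replace it. So the delicate bookkeeping is to show that the simple $\fl$-submodule $\Omega_0 \subseteq \Omega$ provided by Theorem~\ref{cont-simple} actually satisfies $D_\a(\Omega_0) = \Omega_0$ (i.e.\ $\a$ still acts bijectively on it, so $\Omega_0$ is already a $U_\a$-module), which by $U_\a$-simplicity of $\Omega$ forces $\Omega_0 = \Omega$, giving that $\Omega$ itself is $\fl$-simple; then properties (1)–(3) are verified for this $\Omega$ as above. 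A secondary technical point is to pin down the exact set of admissible $x$ and to confirm that the shift keeps $\supp(\Omega)$ in a single $Q$-coset and keeps the level zero — both are immediate since $\a \in R_{re} \subseteq Q$ and the central element $c$ is unaffected by the twist.
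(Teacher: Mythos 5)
Your overall strategy matches the paper's --- localize at a root vector, twist by a suitable $x\in\bbbc$, and invoke Theorem~\ref{cont-simple} in place of Mathieu's finite-length argument --- but the execution has several genuine gaps. The sharpest is an internal contradiction: you want $\a\in R^{ln}(\Omega)$ for (2), yet you also argue from ``$\a$ acts bijectively on $\Omega$''; those are mutually exclusive for a nonzero module. The sign matters: the paper localizes with respect to $f_\a\in\fl^{-\a}$, so on $D^x_{-\a}(M)$ it is $-\a$ that lies in $R^{in}$ (bijectivity of $f_\a$), while $\a$ can be locally nilpotent. Relatedly, you overstate the output of the $x$-selection step. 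The Mathieu/Dimitrov--Grantcharov argument only produces a \emph{nonzero} $\fl$-submodule $N:=\{v(x)\in D^x_{-\a}(M)\mid e_\a^n v(x)=0\hbox{ for some }n\}$, admissible with $\a\in R^{ln}(N)$; it does not give $\a\in R^{ln}$ on all of $D^x_{-\a}(M)$. The simple submodule $\Omega$ must be extracted from $N$, not from $D^x_{-\a}(M)$ at large.

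Once $\Omega$ is taken to be a simple $\fl$-submodule via Theorem~\ref{cont-simple}, the remaining nontrivial step is the one you flag as ``delicate bookkeeping'' but do not actually carry out (and could not, given the sign error). The paper's argument: $\Omega$ is a $U$-submodule of the $U_{-\a}$-simple module $D^x_{-\a}(M)$; applying $D_{-\a}(\cdot)$ gives a nonzero $U_{-\a}$-submodule $D_{-\a}(\Omega)\hookrightarrow D_{-\a}(D^x_{-\a}(M))\simeq D^x_{-\a}(M)$, forcing $D_{-\a}(\Omega)\simeq D^x_{-\a}(M)$ and hence $M\simeq D_{-\a}(M)\simeq D_{-\a}^{-x}(\Omega)$; note this is \emph{not} the same as claiming $\Omega$ equals $D^x_{-\a}(M)$. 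Finally, your argument for (3) is incorrect: the term ${\rm ad}(f_\a)^i(x_\b)$ appearing in $\Theta_{-\a}^x(x_\b)$ is a $(\b-i\a)$-root vector, not a power of $x_\b$, so the twisted operator is not ``built from powers of $x_\b$'' and its local nilpotency is not automatic. The paper's proof of (3) avoids this entirely: from $M\simeq D_{-\a}^{-x}(\Omega)$ and $-\a\in R^{in}(\Omega)$ one gets $\supp(M)=\supp(\Omega)+\bbbz^{\geq 0}\a-x\a$, and then \cite[Prop.~4.4]{you8}, which characterizes $R^{in}$ via unboundedness of the support along $\b$-lines, yields $R^{in}(\Omega)\subseteq R^{in}(M)$, equivalently $\b\in R^{ln}(M)\Rightarrow\b\in R^{ln}(\Omega)$ for real $\b$.
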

\pf Pick $e_\a\in\fl^\a$ and $f_\a\in\fl^{\a}$ such that $(e_\a,f_\a,h_{\a}:=[e_\a,f_\a])$ is an $\frak{sl}_2$-triple. 
Assume $U$ is the universal enveloping algebra of $\fl$ and form $U_{-\a}$ as in \S~\ref{admissible}.
Using the same argument as in \cite[Thm.~3.35]{DG}, there is a complex number $x$ such that 
\[N:=\{v(x)\in D^x_{-\a}(M)\mid \exists n\in\bbbz^{>0}\hbox{ s.t. } e_\a^n v(x)=0\}\] is a nonzero submodule of the $\fl$-module $D^x_{-\a}(M)$; in fact,  $N$ is an admissible  $\fl$-module with $\a\in R^{ln}(N).$ Pick  a nonzero weight vector $w\in N$. The $\fl$-submodule of $D^x_{-\a}(M)$ generated by $w$ is an admissible $\fl$-module 
of level zero, so by Theorem~\ref{cont-simple}, it contains a simple submodule, say e.g., $\Omega$ and we have $\a\in R^{ln}(\Omega).$
 We have 
\begin{align*}
\Omega\stackrel{\rm U-s.m.}{\preccurlyeq }D^x_{-\a}(M)\Longrightarrow& D_{-\a}(\Omega)\stackrel{\rm U_{-\a}-s.m.}{\preccurlyeq }D_{-\a}(D^x_{-\a}(M))\simeq D^x_{-\a}(M)\\
\Longrightarrow &D_{-\a}(\Omega)\stackrel{\rm as~ U_{-\a}-module }{\hookrightarrow } D^x_{-\a}(M).
\end{align*}
But  $D^x_{-\a}(M)$ is a simple $U_{-\a}$-module and so $U_{-\a}$-modules  $D_{-\a}(\Omega)$ and $ D^x_{-\a}(M)$ are isomorphic. Therefore, $D_{-\a}(M)\simeq D_{-\a}^{-x}(\Omega)$ as $U_{-\a}$-modules (and so as $U$-modules).
 In particular, as $\pm\a\in R^{in}(M),$ we have  $U$-module isomorphisms  $M\simeq D_{-\a}(M)\simeq D_{-\a}^{-x}(\Omega)$ as  we desired. 
In particular,  as $\Omega$ is a $U_{-\a}$-module, $-\a\in R^{in}(\Omega)$ and we have  \[ \supp(M)= \supp(D^{-x}_{-\a}(\Omega))=\supp(\Omega)+\bbbz^{{\geq 0}}\a-x\a.\]  
Now, fix $\mu\in\supp(\Omega),$ then by \cite[Prop.~4.4]{you8},  
\begin{align*}
\b\in R^{in}(\Omega)&\Leftrightarrow \mu+k\b\in \supp(\Omega)~ ~~(\hbox{$\forall~k\in\bbbz^{\geq 0}$})\\
&\Rightarrow \mu-x\a+k\b\in \supp(M)~~~ (\hbox{$\forall~k\in\bbbz^{\geq 0}$})\Leftrightarrow\b\in R^{in}(M).
\end{align*}This competes the proof.
\qed

\begin{Pro}\label{twisted-local}
Suppose that $M$ is an {admissible} simple finite weight $\fl$-module. Assume $T$ is a closed symmetric subset of $R$ such that ${T_{re}}=T\cap R_{re}\neq\emptyset$ and 
$(T+\bbbz\d)\cap R\sub T.$ If    $T_{re}\sub R^{in}(M),$ then, $M$ is {of level zero} and it is an iterative twisted localization of an {admissible} zero-level simple $\fl$-module $N$  for which  
$T\cap(R_{\rm f-in}(N)\cap -R_{\rm f-in}(N))=\emptyset.$
\end{Pro}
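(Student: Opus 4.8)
The plan is to first reduce to level zero via Lemma~\ref{final} and then to iterate Proposition~\ref{localization-main}. Call a direction $\dot\b\in\dot R_{re}$ \emph{full} for an $\fl$-module $V$ if $(\dot\b+\bbbz\d)\cap R\sub R^{in}(V)$, i.e.\ every real root over $\dot\b$ acts injectively on $V$. For the reduction, fix $\b\in T_{re}$: since $\b\in T$ and $(T+\bbbz\d)\cap R\sub T$ we have $(\dot\b+\bbbz\d)\cap R=(\b+\bbbz\d)\cap R\sub T$, and as these are real roots (they lie over the real direction $\dot\b$) they lie in $T_{re}\sub R^{in}(M)$; thus $\dot\b$, and by symmetry $-\dot\b$, is full for $M$, so $\b,-\b\in R_{\rm f-in}(M)$ and Lemma~\ref{final} gives that $M$ has level zero. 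Set $\dot T:=\{\dot\b\in\dot R_{re}\mid(\dot\b+\bbbz\d)\cap T\neq\emptyset\}$, a finite symmetric set every direction of which is full for $M$ by the same computation.

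Now build a chain $N_0:=M,N_1,N_2,\dots$ as follows. As long as there is an even real root $\a$ with $\dot\a\in\dot T$ such that $\dot\a$ and $-\dot\a$ are both full for $N_i$, apply Proposition~\ref{localization-main} to $N_i$ and $\a$ — legitimate, since $N_i$ is a simple admissible zero-level module and $\pm\a\in R^{in}(N_i)$ — to obtain a zero-level simple admissible module $N_{i+1}$ such that $N_i$ is a twisted localization of $N_{i+1}$, $\a\in R^{ln}(N_{i+1})$, and $R^{in}(N_{i+1})\sub R^{in}(N_i)$ (the last by the final display in the proof of Proposition~\ref{localization-main}). Then $\a\notin R^{in}(N_{i+1})$, so $\dot\a$ is no longer full for $N_{i+1}$, while $R^{in}(N_{i+1})\sub R^{in}(N_i)$ prevents any direction from becoming full; hence the finite number $|\{\dot\b\in\dot T\mid \pm\dot\b\ \text{full for}\ N_i\}|$ strictly decreases at each step. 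So the process halts at a zero-level simple admissible module $N$, and composing the successive twisted localizations exhibits $M$ as an iterative twisted localization of $N$.

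It remains to check that $T\cap(R_{\rm f-in}(N)\cap-R_{\rm f-in}(N))=\emptyset$. Suppose $\gamma\in T$ with $\gamma,-\gamma\in R_{\rm f-in}(N)$; then $\gamma$ is real, $\gamma\in T_{re}$, and $\pm\dot\gamma$ are full for $N$. If $\dot\gamma$ carries an even real root this contradicts the halting condition. If $\dot\gamma$ is odd, then $2\gamma=\gamma+\gamma$ lies in $R_0$ (by $2(R_{re}\cap R_1)\sub R_0$) and in $T$ (as $T$ is closed), so $2\dot\gamma\in\dot T$ carries the even root $2\gamma$; moreover $2\dot\gamma$ is full for $N$, because for each root $\b=\dot\gamma+m\d$ over $\dot\gamma$ the element $x^2$, where $x$ spans $\fl^\b$, spans $\fl^{2\b}$ and acts injectively since $x$ does, so the roots $2\b=2\dot\gamma+2m\d$ — unbounded above and below in the coefficient of $\d$ — all act injectively on $N$, and then Theorem~$\ddagger$ applied to $N$ and the direction $2\dot\gamma$ forces alternative~(ii), i.e.\ $2\dot\gamma$ is full; likewise $-2\dot\gamma$ is full. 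This again contradicts the halting condition. (The same squaring argument also shows that whenever $\dot T\neq\emptyset$ one of its directions carries an even root, so the loop is entered at least once, as it must be since $M$ itself violates the conclusion.)

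The step I expect to be the main obstacle is precisely this interplay between odd real directions and their even doubles: Proposition~\ref{localization-main} localizes only along even real roots, so the injectivity carried by an odd real direction of $T$ must be transferred to its double before the localization machinery applies, and Theorem~$\ddagger$ is what makes this transfer work. Once level zero is secured, the remainder is a routine finite strictly-decreasing descent.
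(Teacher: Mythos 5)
Your proof is correct and follows essentially the same route as the paper's: reduce to level zero via Lemma~\ref{final}, then iterate Proposition~\ref{localization-main}, using its part~(3) (equivalently, the final display in its proof, $R^{in}(\Omega)\subseteq R^{in}(M)$) to ensure progress is never undone. The paper organizes the iteration by fixing a positive system $\dot S^+$ of the finite root system $\dot S$ attached to $S:=T\cap R_0$ and running through it in order, while you argue termination via a strictly decreasing count of directions that are still ``full in both senses''; these are minor variants of one scheme.

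What you add is the final verification. The paper works with even roots only ($S=T\cap R_0$) and, once $\dot S^+$ is exhausted, declares the proof complete; but the conclusion $T\cap(R_{\rm f-in}(N)\cap -R_{\rm f-in}(N))=\emptyset$ also concerns odd $\gamma\in T_{re}$, and the reduction of the odd case to the even one is left implicit there. You supply it explicitly: pass to $2\gamma\in T\cap R_0$ and apply Theorem~$\ddagger$ to the direction $2\dot\gamma$, noting that the roots $2\beta$ (with $\beta$ over $\dot\gamma$ and $2\beta\in R$) form a set unbounded above and below in the coefficient of $\delta$ and lie in $R^{in}(N)$, which rules out (i), (iii), (iv) and forces (ii). Two small refinements would tighten this: not every $\beta$ over $\dot\gamma$ has $2\beta\in R$, but once one does, so do all of its $r\bbbz\delta$-translates by~$(\dagger)$, and that is all the unboundedness you actually need; and instead of invoking one-dimensionality of $\fl^{2\beta}$ to make $x^2$ span it, it is cleaner to use the support characterization of $R^{in}$ from \cite[Prop.~4.4]{you8}: $\beta\in R^{in}(N)$ iff $\mu+k\beta\in\supp(N)$ for all $\mu\in\supp(N)$ and $k\geq 0$, which gives $2\beta\in R^{in}(N)$ immediately. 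With those trims, your argument is a complete and slightly more explicit rendering of the paper's.
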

\pf That level of $M$ is zero follows from Lemma~\ref{final}. Set $S:=T\cap R_0$ and  recall that  $2(R_{re}\cap R_1)\sub R_0.$ This in turn implies that   $S\cap R_{re}\neq \emptyset.$
One knows \[ \dot S:=\{\dot\b\in \dot R\mid (\dot\b+\bbbz\d)\cap S\neq \emptyset\},\]
 is a finite root system.  Fix a set $\dot S^+=\{\dot\a_1,\ldots,\dot\a_\ell\}$ of positive roots of $\dot S$ with respect to a fixed base of $\dot S$ and for each 
$1\leq i\leq \ell,$ fix  a root $\a_i\in S\cap(\dot \a_i+\bbbz\d).$ We have $\a_1, -\a_1\in  {R^{in}(M)}$ and so, by Proposition~\ref{localization-main}, there is a complex number $x$ such that  $M\simeq D^{x}_{-\a_1}(\Omega)$ for a simple admissible  $\fl$-module $\Omega$ for which $\a_1\in R^{ln}(\Omega)$, that is, $(\pm\dot\a_1+\bbbz\d)\cap R^{ln}(\Omega)\neq\emptyset$. Suppose that  $1\leq t\leq \ell$ and  
$(\pm\dot\a_i+\bbbz\d)\cap R^{ln}(\Omega)\neq\emptyset$ for $1\leq i\leq t.$ If $t=\ell,$ then, we are done,
 otherwise, since $\pm\a_{t+1}\in R^{in}(\Omega),$ as above there are a  complex number $y$ and a simple admissible  module $\Omega'$ such that $\Omega=D^{y}_{-\a_{t+1}}(\Omega')$ with   $\a_{t+1}\in R^{ln}(\Omega');$ in particular, it follows form   part~(3) of Proposition~\ref{localization-main} that    $(\pm\dot\a_i+\bbbz\d)\cap R^{ln}(\Omega')\neq\emptyset$ for all $1\leq i\leq t+1.$  Repeating this process, we will get finitely many complex numbers $y_1,\ldots,y_s,$  indices $1\leq i_1,\ldots,i_s\leq \ell$ and a zero-level simple admissible   module $N$ such that $M\simeq D_{-\a_{i_s}}^{y_s}\cdots D_{-\a_{i_1}}^{y_1}(N)$ and $(\pm\dot\a_i+\bbbz\d)\cap R^{ln}(N)\neq\emptyset$ for all $1\leq i\leq \ell.$ This completes the proof. \qed
\medskip

Proposition~\ref{twisted-local} together with the results of \cite{DKYY, DKY,KRY, you10} give the following theorem which in turn completes the characterization problem of admissible  $\fl$-modules due to \cite{DMP, KRY,  FGG, G, H, Mat, you8, you11}. To state this theorem, we  first recall the definition of  the quadratic Lie superalgebra from \cite{KRY}; it  is the superspace \[\cQ=\underbrace{s^2\bbbc[s^{\pm4}]\op t^2\bbbc[t^{\pm4}]}_{\cQ_0}\op \underbrace{t\bbbc[t^{\pm4}]\op t^{-1}\bbbc[t^{\pm4}]}_{\cQ_1}\] together 
with the bracket 
\begin{equation*}\label{bracket}\begin{array}{lll}
~[\cQ_0,\cQ_0]=[t^2\bbbc[t^{\pm4}], \cQ_1]:=\{0\},& ~[t^{4k+1},t^{4k'-1}]:=0,&
~[t^{4k+1},t^{4k'+1}]:=t^{4(k+k')+2},\\
~[t^{4k-1},t^{4k'-1}]:=-t^{4(k+k')-2},&
~[s^{4k-2},t^{4k'+1}]:=t^{4(k+k')-1},&
~[s^{4k+2},t^{4k'-1}]:=t^{4(k+k')+1}.
\end{array}
\end{equation*}
In \cite{KRY}, we characterize $\bbbz$-graded simple modules with uniformly bounded dimensions of homogeneous spaces, over $A\op\cQ$ where $A$ is a $\bbbz$-graded abelian Lie algebra. We recall that affine Lie superalgebra $\fl$ has a natural $\bbbz$-grading and that the  standard Cartan subalgebra $\fh$ of $\fl$ contains a vector $d$ acting  as a degree derivation on $\fl.$ In \cite{KRY}, we also give a complete   characterization of finite $\fh$-weight modules over   $(A\op\cQ)\rtimes \fh$ where the action of $\fh$ on $A\op\cQ$ is just the action of $d$ on $A\op\cQ$ as a degree derivation.
\begin{Thm}\label{twisted-local-2}
Let  $M$ be a simple {admissible}  $\fl$-module  such that $R_{\rm f-in}(M)\cap-R_{\rm f-in}(M)\neq\emptyset.$ Then, the level of  $M$ is zero and $M$ is an iterative twisted localization of a zero-level simple {admissible} $\fl$-module $N$  for which  
$R_{\rm f-in}(N)\cap -R_{\rm f-in}(N)=\emptyset.$ In particular, there are  $\b_1,\ldots,\b_s\in R_{re}\cap R_0$, finitely many complex numbers $y_1,\ldots,y_s,$  a parabolic subset $P$ of $R$  and a finite $\fh$-weight   module $\Omega$ over  $\LL^\circ=\Bigop{\a\in P\cap-P}{}\LL^\a$ such that 
\[M\simeq D_{\b_s}^{y_s}\cdots D_{\b_1}^{y_1}{\rm Ind}_P(\Omega);\] moreover, 
$\Omega$ and $\LL^\circ$ satisfy one of the  following:
\begin{itemize}
\item  $\Omega$ is  cuspidal and $\fl^\circ$ is a  direct sum of finitely many finite dimensional  Lie superalgebras which are either a reductive Lie algebra or a basic classical simple Lie superalgebra.
\item  $\fl^\circ=\Bigop{k\in\bbbz}{}\LL^{k\d}$ which in turn is a semi direct product  $H\rtimes \fh$ in which $H$ is
\begin{itemize}
\item  a $\bbbz$-graded abelian Lie algebra if $\fl\neq A(2k,2l)^{(4)},$ 
\item  $A\op\cQ$ where  $A$ is a $\bbbz$-graded abelian Lie algebra if $\fl=A(2k,2l)^{(4)}$.
\end{itemize}
 \end{itemize}
\end{Thm}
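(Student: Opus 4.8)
The plan is to assemble Theorem~\ref{twisted-local-2} by combining Proposition~\ref{twisted-local} with the previously established structure results, rather than by any new direct computation. First I would apply Lemma~\ref{final}: since $R_{\rm f-in}(M)\cap -R_{\rm f-in}(M)\neq\emptyset$, the level of $M$ is zero. Next I would cook up the right closed symmetric subset $T$ of $R$ to feed into Proposition~\ref{twisted-local}. The natural choice is to take $\dot T$ to be the (finite) root subsystem of $\dot R$ generated by the images of the roots in $R_{\rm f-in}(M)\cap -R_{\rm f-in}(M)\cap R_0$ (using $2(R_{re}\cap R_1)\subseteq R_0$ so this intersection is nonempty), and set $T:=(\dot T+\bbbz\d)\cap R$; one checks $T=-T$, $T$ is closed, $(T+\bbbz\d)\cap R\subseteq T$, and $T_{re}\subseteq R^{in}(M)$ because every real root in $\dot T$ has a $\bbbz\d$-translate lying in $R_{\rm f-in}(M)\cap -R_{\rm f-in}(M)$, hence the whole $\bbbz\d$-string is in $R^{in}(M)$ by Theorem~$\ddagger$ (case (ii) applied in both directions). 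Then Proposition~\ref{twisted-local} gives complex numbers $y_1,\dots,y_s$, real even roots $\b_1,\dots,\b_s$ (the $-\a_{i_j}$ of that proof), and a zero-level simple admissible $N$ with $M\simeq D_{\b_s}^{y_s}\cdots D_{\b_1}^{y_1}(N)$ and $T\cap(R_{\rm f-in}(N)\cap -R_{\rm f-in}(N))=\emptyset$.

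The next step is to upgrade ``$T\cap(R_{\rm f-in}(N)\cap -R_{\rm f-in}(N))=\emptyset$'' to ``$R_{\rm f-in}(N)\cap -R_{\rm f-in}(N)=\emptyset$'' outright. Here I would argue that if $R_{\rm f-in}(N)\cap -R_{\rm f-in}(N)$ were still nonempty, I could enlarge $T$ (adding the newly appearing full-injective-and-negated directions) and rerun the localization; since $\dot R$ is finite, this terminates, so after finitely many iterations—absorbed into the same list $y_1,\dots,y_s,\b_1,\dots,\b_s$—we reach a zero-level simple admissible $N$ with $R_{\rm f-in}(N)\cap -R_{\rm f-in}(N)=\emptyset$. (One must be a little careful that twisted localization by a real even root does not create new elements of $R_{\rm f-in}\cap -R_{\rm f-in}$ outside what is already being tracked; this should follow from part~(3) of Proposition~\ref{localization-main} together with Theorem~$\ddagger$, since localizing along $\b$ only shifts the injectivity/nilpotency pattern along $\b$-strings in a controlled way.)

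Finally I would invoke the known classification for modules $N$ with $R_{\rm f-in}(N)\cap -R_{\rm f-in}(N)=\emptyset$. By the results recalled in the introduction—\cite{you10} for the case $R_{\rm f-in}(N)\neq\emptyset$ with $R_{\rm f-in}(N)\cap -R_{\rm f-in}(N)=\emptyset$, together with \cite{DKYY, DKY, KRY, you8, you9} for the integrable, quasi-integrable, hybrid and zero-$\d$-part cases—such an $N$ is parabolically induced, $N\simeq{\rm Ind}_P(\Omega)$ for a parabolic subset $P$ of $R$ and a finite $\fh$-weight module $\Omega$ over $\LL^\circ=\Bigop{\a\in P\cap-P}{}\LL^\a$, with $(\LL^\circ,\Omega)$ falling into exactly the two listed alternatives: either $\Omega$ cuspidal over a finite-dimensional $\fl^\circ$ that is a direct sum of reductive Lie algebras and basic classical simple Lie superalgebras, or $\fl^\circ=\Bigop{k\in\bbbz}{}\LL^{k\d}=H\rtimes\fh$ with $H$ a $\bbbz$-graded abelian Lie algebra (when $\fl\neq A(2k,2l)^{(4)}$) or $H=A\op\cQ$ with $A$ a $\bbbz$-graded abelian Lie algebra (when $\fl=A(2k,2l)^{(4)}$), the latter by \cite{KRY}. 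Substituting this description of $N$ into $M\simeq D_{\b_s}^{y_s}\cdots D_{\b_1}^{y_1}(N)$ yields the displayed formula and finishes the proof.

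The main obstacle I anticipate is the middle step—ensuring the iteration genuinely terminates and that no new ``problematic'' roots (elements of $R_{\rm f-in}\cap -R_{\rm f-in}$ transverse to the $T$ being tracked) are produced by the successive twisted localizations. Concretely, one needs a monotonicity invariant: something like the rank, or the number of irreducible components, of the finite root system generated by $R_{\rm f-in}(\,\cdot\,)\cap -R_{\rm f-in}(\,\cdot\,)\cap R_0$, which must strictly decrease (or at least not increase, with strict decrease forced infinitely often) under each round of localization. Establishing this cleanly—using Proposition~\ref{localization-main}(3) to control $R^{ln}$ and Theorem~$\ddagger$ to propagate the injectivity/local-nilpotency dichotomy along $\d$-strings—is where the real work lies; the rest is bookkeeping plus citation of the earlier classification theorems.
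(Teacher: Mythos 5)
Your overall structure is correct and matches the paper's (citation-based) proof: Lemma~\ref{final} gives level zero, Proposition~\ref{twisted-local} produces the iterated localization, and the earlier classification results from \cite{DKYY, DKY, KRY, you10} provide the parabolically-induced description of $N$. However, the middle step that you flag as ``the main obstacle'' and ``where the real work lies''---the iteration-with-termination argument over successively enlarged $T$'s---is not needed, and the paper does not perform it. The key observation is that Proposition~\ref{localization-main}(3), applied at each localization step inside the proof of Proposition~\ref{twisted-local}, already gives the monotonicity $R^{\rm ln}(M)\subseteq R^{\rm ln}(N)$, equivalently $R^{\rm in}(N)\subseteq R^{\rm in}(M)$. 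From this one reads off directly that $R_{\rm f\text{-}in}(N)\subseteq R_{\rm f\text{-}in}(M)$ (a $\delta$-string entirely injective on $N$ was entirely injective on $M$), hence $R_{\rm f\text{-}in}(N)\cap -R_{\rm f\text{-}in}(N)\subseteq R_{\rm f\text{-}in}(M)\cap -R_{\rm f\text{-}in}(M)$. So if $T$ is chosen once and for all to contain $R_{\rm f\text{-}in}(M)\cap -R_{\rm f\text{-}in}(M)$, then the conclusion $T\cap\bigl(R_{\rm f\text{-}in}(N)\cap -R_{\rm f\text{-}in}(N)\bigr)=\emptyset$ from Proposition~\ref{twisted-local} already forces $R_{\rm f\text{-}in}(N)\cap -R_{\rm f\text{-}in}(N)=\emptyset$ outright. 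No second round of localization, and no decreasing ``rank'' invariant, is required.

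On the choice of $T$: you take the root subsystem generated by the relevant reduced roots, which is harmless, but in fact the set $R_{\rm f\text{-}in}(M)\cap -R_{\rm f\text{-}in}(M)$ (saturated along $\delta$-strings and together with $0$) is itself already closed---this is essentially the $X_3$-closure fact used implicitly in Lemma~\ref{fi-10}: if $\dot\a,\dot\b$ and $\dot\a+\dot\b$ are even with both $\pm$-strings of $\dot\a,\dot\b$ injective, then injectivity propagates to the $\pm(\dot\a+\dot\b)$-strings. So one can take $T$ to be exactly the $\delta$-saturation of $R_{\rm f\text{-}in}(M)\cap -R_{\rm f\text{-}in}(M)$, which is automatically contained in $R^{\rm in}(M)$ and contains the set you need it to contain. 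Your justification that ``every real root in $\dot T$ has a $\bbbz\delta$-translate in $R_{\rm f\text{-}in}(M)\cap -R_{\rm f\text{-}in}(M)$'' is not quite the right mechanism---the generated subsystem could a priori contain sums that are not among the original full-injective roots---and it is the closure property, not Theorem~$\ddagger$ alone, that makes this clean. With that fix, and with the iteration replaced by the one-shot monotonicity argument above, your proposal reproduces the paper's intended (but unwritten) proof.
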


We conclude the paper by determining which affine Lie superalgebras $\mathscr{A}$, with the corresponding root system 
$R$ admit finite weight modules 
$M$ satisfying $R_{re}=R^{in}(M).$ To this end, we require the following lemma:

\begin{lem}\label{above}
Suppose that $\fk$ is a finite dimensional reductive Lie algebra with center $Z$ and  semisimple part $[\fk,\fk]=\Bigop{i=1}{n}\fg_i$ in which each $\fg_i$ is a simple Lie algebra with Cartan subalgebra $\fh_i$. Set $\fh:=\Bigop{i=1}{n}\fh_i\op Z$ and assume $M$ is an admissible  finite  $\fh$-weight $\fk$-module.  Fix  $1\leq i\leq n$ and  suppose that there is $\lam\in \supp(M)$ such that $\fg_i$-submodule generated by $M^{\lam}$ is infinite dimensional, then  $\fg_i$ is either of type $A$ or  of type $C$.
\end{lem}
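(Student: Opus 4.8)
The plan is to reduce to the theory of cuspidal (infinite-dimensional admissible) modules over simple Lie algebras. First I would pass from $\fk$ to the summand $\fg_i$: since $M$ is a finite $\fh$-weight $\fk$-module with uniformly bounded multiplicities, its restriction to $\fg_i$ is a finite $\fh_i$-weight module, again with uniformly bounded multiplicities, hence admissible as a $\fg_i$-module. Let $W$ be the $\fg_i$-submodule generated by the finite-dimensional weight space $M^{\lambda}$; by hypothesis $W$ is infinite-dimensional. I would then extract from $W$ a simple infinite-dimensional admissible $\fg_i$-module: for finite-dimensional simple Lie algebras admissible modules have finite length (this is Mathieu's result, invoked in the introduction), so $W$ has a simple subquotient, and since $W$ is infinite-dimensional while every finite-dimensional simple $\fg_i$-module has bounded dimension, at least one simple subquotient $S$ must be infinite-dimensional.

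Next I would invoke the classification of infinite-dimensional simple admissible (indeed, simply bounded-multiplicity finite weight) modules over simple Lie algebras: such a module is parabolically induced from a cuspidal module over a Levi subalgebra, and a simple Lie algebra admits a genuinely infinite-dimensional cuspidal module only when it is of type $A$ or type $C$ (Fernando \cite{F}, Mathieu \cite{Mat}). Here one must be slightly careful: the infinite-dimensional simple admissible module $S$ need not itself be cuspidal, only parabolically induced from a cuspidal module over a proper Levi $\fg'\subseteq\fg_i$; but a proper Levi of a simple Lie algebra that is not of type $A$ or $C$ has all its simple components again not of type $A$ or $C$ unless\dots\ actually the cleanest route is: the existence of \emph{any} infinite-dimensional simple admissible module forces $\fg_i$ to have a simple component of its Levi admitting a cuspidal module, and by Mathieu's classification the only simple Lie algebras with cuspidal modules are $\mathfrak{sl}_{n+1}$ and $\mathfrak{sp}_{2n}$; since $\fg_i$ itself is simple and it (rather than a proper Levi) supports an infinite-dimensional admissible module only if it is type $A$ or $C$, we conclude $\fg_i$ is of type $A$ or $C$.

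A subtlety to handle carefully is the passage ``$W$ infinite-dimensional $\Rightarrow$ $W$ has an infinite-dimensional simple subquotient.'' Admissibility of $W$ over $\fg_i$ is needed for finite length; admissibility requires bounded multiplicities (inherited from $M$) \emph{and} support in a single coset of the root lattice $Q_i$ of $\fg_i$. The support condition may fail for $W$ as a whole, but $W$ decomposes as a finite direct sum of its intersections with the $Q_i$-cosets, each of which is a $\fg_i$-submodule, and at least one such piece is infinite-dimensional; that piece is admissible, hence of finite length, hence has a simple subquotient, and by the dimension/boundedness argument at least one simple subquotient in the decomposition series is infinite-dimensional. I expect this bookkeeping — making sure one lands on a genuinely infinite-dimensional \emph{simple admissible} $\fg_i$-module so that Mathieu's classification applies verbatim — to be the main (though routine) obstacle; once that module is in hand, the type restriction to $A$ or $C$ is immediate from the known classification.
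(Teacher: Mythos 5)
Your overall strategy matches the paper's: restrict attention to a $\fg_i$-submodule generated by a single weight space, argue it is an infinite-dimensional admissible $\fg_i$-module, extract (via Mathieu's finite-length result) a simple infinite-dimensional admissible subquotient, and then appeal to a classification theorem to conclude $\fg_i$ is of type $A$ or $C$. Two places, however, need correction.

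First, your claim that ``the restriction of $M$ to $\fg_i$ is a finite $\fh_i$-weight module, again with uniformly bounded multiplicities'' is false in general: an $\fh_i$-weight space of $M$ is $\bigoplus_{\mu|_{\fh_i}=\nu}M^{\mu}$, which sums over all values of $\mu$ on $\ft:=Z\oplus\bigoplus_{j\neq i}\fh_j$ and can therefore be infinite-dimensional. What \emph{is} true — and is what your argument actually uses — is that the $\fg_i$-submodule $W$ generated by $M^{\lambda}$ has this boundedness property, because $\fg_i$ commutes with $\ft$, so $W$ lives inside $N:=\bigoplus_{\mu|_{\ft}=\lambda|_{\ft}}M^{\mu}$, whose $\fh_i$-weight spaces are exactly $\fh$-weight spaces of $M$. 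The paper makes this precise by introducing $\ft$ and $N$. Relatedly, your worry about $W$ straddling several $Q_i$-cosets is unnecessary: since $W$ is generated by a single weight space $M^{\lambda}$ and $\fg_i$ shifts $\fh_i$-weights by elements of $Q_i$, the $\fh_i$-support of $W$ automatically lies in $\lambda|_{\fh_i}+Q_i$.

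Second, and more seriously, the final classification step as you have written it does not close. You correctly observe that the infinite-dimensional simple admissible $\fg_i$-module $S$ you extract need not be cuspidal over $\fg_i$, but only parabolically induced from a cuspidal module over some Levi $\fl\subsetneq\fg_i$. Your proposed fix — that this forces a Levi component of type $A$ or $C$, ``and since $\fg_i$ itself \ldots supports an infinite-dimensional admissible module only if it is type $A$ or $C$'' — is circular: the last clause \emph{is} the statement being proved, and knowing that a Levi component of $\fg_i$ is of type $A$ or $C$ does not constrain $\fg_i$ (every simple Lie algebra has Levi components of type $A$). What is actually needed is the Benkart--Britten--Lemire theorem, \cite[Prop.~1.4]{BBL}, which says directly that a finite-dimensional simple Lie algebra admitting an infinite-dimensional simple module with bounded weight multiplicities must be of type $A$ or $C$; this is strictly stronger than the Fernando--Mathieu cuspidal classification, precisely because it handles the parabolically-induced case. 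This is the reference the paper uses, and replacing your Fernando--Mathieu citation with it repairs the argument.
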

\pf Suppose $i$ is as in the statement and set  \[\ft:=Z+\sum_{i\neq j=1}^n\fh_j  ~~\hbox{ as well as } ~~ \fl_i:=\fg_i+\fh=\fg_i\op\ft.\] We consider each $\a\in\fh_i^*$  as a functional on $\fh$ by zero value on $\ft.$ Recall  $\lam$ from the statement and set $\aa:=\{\mu\in \fh^*\mid \mu(h)=\lam(h)~~(\forall h\in \ft)\}.$  Then \[N:=\Bigop{\mu\in \aa}{}M^\mu\] is an {admissible}  finite $\fh_i$-weight $\fg_i$-module  and the $\fg_i$-submodule generated by $M^{\lam}$ is an 
infinite dimensional {admissible} finite $\fh_i$-weight $\fg_i$-module; in particular,  it is of finite length by Lemma~3.3 of \cite{Mat} and so it has a simple infinite dimensional subfactor.
Therefore, by \cite[Prop.~1.4]{BBL}, $\fg_i$ is either of type $A$ or of type $C.$\qed

\begin{Pro}
Suppose that $M$ is a  finite weight module over an affine Lie superalgebra $\mathscr{A}$ with $\mathscr{A}\neq \{0\}.$ Suppose $R=R_0\cup R_1$ is its root system and assume  $R^{in}(M)=R_{re}.$ Then $\mathscr{A}$ can be only   of types  $A(\ell-1,\ell-1)^{(1)},$
$A(k-1,\ell-1)^{(1)},$ $B(0,\ell)^{(1)},$ $D(2,1;\lam)^{(1)}$ 
$(\lam\neq 0,-1)$,  $D(1,\ell)^{(1)},$ $A(0,2\ell-1)^{(2)},$ $A(1,2\ell-1)^{(2)},$ $A(0,2\ell)^{(4)}$ and $D(1,\ell)^{(2)}.$
\end{Pro}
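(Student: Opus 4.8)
The plan is to reduce everything to the finite-dimensional ``small'' algebra $\dot\fg$ sitting inside $\mathscr{A}$ and then invoke Lemma~\ref{above} component-by-component. First I would pass to a simple subquotient of $M$; since $R^{in}(M)=R_{re}$ is a property that is inherited by (nonzero) subquotients whose support meets the relevant roots, and since $Q_{re}$ has finite index in $Q$ whenever $\mathscr{A}$ is not of one of the two excluded families, I may as well assume $M$ is simple and admissible. By Remark~\ref{rem-new}(i) this is automatic once $R^{in}(M)=R_{re}$. Then the key point is that $R^{in}(M)=R_{re}$ forces, for each real $\dot\a\in\dot R$, all of $(\dot\a+\bbbz\d)\cap R$ to act injectively; in particular every root vector of $\dot\fg=\bigoplus_{0\neq\a\in\dot S}\fl^\a\oplus\sum[\fl^\a,\fl^{-\a}]$ acts injectively on suitable weight spaces, so no weight space generates a finite-dimensional $\dot\fg$-submodule that is a highest-weight module in the ``wrong'' direction — equivalently the $\dot\fg$-submodule generated by a generic weight space is infinite dimensional on every simple ideal $\fg_i$ of $\dot\fg$ on which the corresponding roots are real.

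Next I would apply Lemma~\ref{above} to a suitable weight space of $M$ regarded as a module over the reductive Lie algebra $\dot\fg\op\bbbc d$ (or over $\dot\fg$ itself after restricting weights): the hypothesis ``$\fg_i$-submodule generated by $M^\lam$ is infinite dimensional'' is met for each simple ideal $\fg_i$ whose roots lie in $R_{re}$, and the conclusion is that each such $\fg_i$ is of type $A$ or type $C$. Translating back: the finite root system $\dot S=\dot R\cap R_0$ must have all of its irreducible components of type $A$ or $C$, and moreover the odd roots $\dot R_1$ and the real-root-with-double structure coming from $(\dagger)$ must be compatible with this. Here I would consult the classification of affine Lie superalgebras and their root systems (\cite{van-thes}, \cite[Exa.~3.1~\&~3.2, Table~5]{DSY}): one runs down the list of twisted and untwisted affine Lie superalgebras $\mathscr{A}$ with $\mathscr{A}_1\neq\{0\}$ and checks, for each, whether $\dot S$ has only type $A$/$C$ components and whether the nonsingular/short-root data permit a finite weight module with every real root injective. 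The surviving types are exactly the ten listed: the untwisted $A(\ell-1,\ell-1)^{(1)}$, $A(k-1,\ell-1)^{(1)}$, $B(0,\ell)^{(1)}$, $D(2,1;\lam)^{(1)}$, $D(1,\ell)^{(1)}$, and the twisted $A(0,2\ell-1)^{(2)}$, $A(1,2\ell-1)^{(2)}$, $A(0,2\ell)^{(4)}$, $D(1,\ell)^{(2)}$.

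For the converse direction — exhibiting, for each listed type, an actual finite weight module with $R^{in}(M)=R_{re}$ — I would point to the loop-module/parabolic-induction constructions: for the untwisted types one takes an appropriate loop module over a cuspidal or highest-weight module of the underlying basic classical superalgebra (type $A$ and $C$ components admit cuspidal modules, by \cite{BBL} and the super analogues, precisely because they are of type $A$ or $C$), and for the twisted types $A(0,2\ell-1)^{(2)}$, $A(1,2\ell-1)^{(2)}$, $A(0,2\ell)^{(4)}$, $D(1,\ell)^{(2)}$ one uses the corresponding twisted loop constructions; the two families $A(1,2\ell-1)^{(2)}$ and, for the excluded $X^{(1)}$ with $X$ of type $I$, the modules studied in \cite{CFR} show the boundary cases are genuinely realized. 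The main obstacle I anticipate is the bookkeeping in the converse: checking that in each of the non-$A$-type-looking entries (notably $B(0,\ell)^{(1)}$, $D(2,1;\lam)^{(1)}$, and the twisted $D$ and $A^{(4)}$ types) the short real roots and the roots $\dot\a$ with $2\dot\a\in R$ can all be made injective simultaneously — this is where the structure constant $r$ from $(\dagger)$ and Remark~\ref{rem-new}(ii) enter, and one must verify that translating by $r\bbbz\d$ does not obstruct injectivity of the doubled roots. The forward direction, by contrast, is a relatively clean application of Lemma~\ref{above} plus a table lookup.
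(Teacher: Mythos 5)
Your core strategy is the same as the paper's: restrict to the finite-dimensional reductive Lie algebra $\fk=\bigoplus_{\a\in\dot S}\mathscr{A}^\a$, observe that injectivity of real root vectors forces any $\fg_i$-submodule generated by a weight space to be infinite dimensional, apply Lemma~\ref{above} to conclude each simple ideal is of type $A$ or $C$, and finish with a table lookup. That part is right.

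However, your reduction step is both unnecessary and genuinely gappy. You claim you may ``pass to a simple subquotient'' and then invoke Remark~\ref{rem-new}(i) to get admissibility. Two problems: (1) injectivity of a root vector is inherited by submodules but \emph{not} by quotients, so ``$R^{in}=R_{re}$ is inherited by subquotients'' is false as stated; (2) the existence of a simple submodule of a general finite weight module is precisely the kind of fact this paper works hard to establish (Theorem~\ref{cont-simple}), and there only for admissible modules of level zero --- invoking it here to \emph{prove} admissibility would be circular. The paper sidesteps all of this: one simply fixes $\lam\in\supp(M)$, sets $N:=U(\fk)M^\lam$, and notes that since every real root vector $e_\alpha$ and $e_{-\alpha}$ act injectively, $\dim M^\mu=\dim M^{\mu+\alpha}$ for every real $\alpha$ and every $\mu$, so $\dim N^\mu\leq\dim M^\lam$ for all $\mu\in\supp(N)\subseteq\lam+Q_{\fk}$. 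Hence $N$ is already admissible as a $\fk$-module with no hypotheses on $M$ beyond the stated ones, and Lemma~\ref{above} applies directly. Your passage to a simple $M$ and your appeal to Remark~\ref{rem-new}(i) should be deleted and replaced by this direct bound. Finally, the ``converse direction'' you sketch (realizing a module with $R^{in}(M)=R_{re}$ for each listed type) is not part of the proposition's claim --- the statement asserts only necessity --- so that paragraph is extraneous work.
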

\pf  Set
$\dot S:=\dot R\cap R_0$ and $\fk:=\Bigop{\a\in \dot S}{}\mathscr{A}^\a.$ Then $\fk$ is a reductive Lie algebra. Fix $\lam\in\supp(M).$ Then, the $\fk$-submodule $N$ generated by $M^\lam$  is a finite weight module over $\fk.$ Since all nonzero root vectors corresponding to real roots act injectively, the dimensions of weight spaces of $N$ are uniformly bounded and also for any simple constituent $\fg$ of $\fk$, the $\fg$-module generated by any nonzero weight vector is infinite dimensional.
So, by Lemma~\ref{above},  each simple constituent of $\fk$ is either of type $A$ or of type $C.$ So, $\mathscr{A}$ is of one of the types $A(\ell-1,\ell-1)^{(1)},$
$A(k-1,\ell-1)^{(1)},$ $B(0,\ell)^{(1)},$ $D(2,1;\lam)^{(1)}$ 
$(\lam\neq 0,-1)$,  $D(1,\ell)^{(1)},$ $A(0,2\ell-1)^{(2)},$ $A(1,2\ell-1)^{(2)},$ $A(0,2\ell)^{(4)}$ and $D(1,\ell)^{(2)}.$
\qed

\end{document}